\newcommand{\norm}[1]{\left\Vert#1\right\Vert}
\newcommand{\abs}[1]{\left\vert#1\right\vert}
\newcommand{\Set}[1]{\ensuremath{ \left\{ #1 \right\} }}
\newcommand{\set}[1]{\ensuremath{ \{ #1 \} }}
\newcommand{\R}{\mathbb{R}}
\renewcommand{\mid}{\,|\,}
\newcommand{\Mid}{\:\big | \:}
\newcommand{\mmid}{\;\Big | \;}
\newcommand*{\cadlag}{c\`adl\`ag}
\DeclareMathOperator*{\esssup}{ess\,sup}
\DeclareMathOperator*{\essinf}{ess\,inf}
\begin{document}

\title{Dual Representation of Minimal Supersolutions of Convex BSDEs\tnoteref{t}}
\tnotetext[t]{We thank the anonymous referees for valuable comments and suggestions.}

\keyAMSClassification{60H10, 01A10, 60H20}
\ArXiV{1308.1275}

\author[a,1,s]{Samuel Drapeau}
\author[b,2,s]{Michael Kupper}
\author[c,3]{Emanuela Rosazza Gianin}
\author[a,4,u]{Ludovic Tangpi}
\address[a]{Humboldt-Universit\"at Berlin, Unter den Linden 6, 10099 Berlin, Germany}
\address[b]{Universit\"at Konstanz, Universit\"atsstra\ss e 10, 78464 Konstanz, Germany}
\address[c]{University of Milano-Bicocca, Via Bicocca degli Arcimboldi 8, 20126 Milan, Italy}

\eMail[1]{drapeau@math.hu-berlin.de}
\eMail[2]{kupper@uni-konstanz.de}
\eMail[3]{emanuela.rosazza1@unimib.it}
\eMail[4]{tangpi@math.hu-berlin.de}


\myThanks[s]{MATHEON project E.11}
\myThanks[u]{Funding: Berlin Mathematical School, phase II scholarship}

\date{\today}

\abstract{
	 We give a dual representation of minimal supersolutions of BSDEs with non-bounded, but integrable terminal conditions and under weak requirements on the generator which is allowed to depend on the value process of the equation. 
	 Conversely, we show that any dynamic risk measure satisfying such a dual representation stems from a BSDE.
	 We also give a condition under which a supersolution of a BSDE is even a solution.
}
\keyWords{Convex Duality; Supersolutions of BSDEs; Cash-Subadditive Risk Measures.}

\maketitle

\section{Introduction}

Since their introduction by \citet{peng01}, nonlinear Backward Stochastic Differential Equations (BSDEs) have found numerous applications in mathematical finance.
For instance, they are used to constructively describe the optimal solution of some utility maximization problems, see \citet{HIM05}.
Through the $g$-expectations of \citet{peng03}, BSDEs offer a framework to study nonlinear expectations and time consistent dynamic risk measures as described by \citet{gianin02} and \citet{Del-Peng-Ros}.
Mainly driven by its financial applications, the study of BSDEs has been extended in various ways beyond the question of existence and uniqueness of solutions. 
Many authors have been interested in questions such as numerical approximation, structural and path properties of BSDE solutions, see for instance the survey of \citet{karoui01} for an overview.
The subject of this paper is to study BSDEs by convex duality theory.

Deviating from the usual quadratic growth or Lipschitz assumptions on the generator of the BSDE, \citet{DHK1101} show existence of the minimal supersolution of a BSDE. 
They study the properties of minimal supersolutions and give the link to cash-subadditive risk measures of \citet{ravanelli01}.
Our main objectives are, on the one hand, to derive a dual representation of minimal supersolutions of BSDEs, and, on the other hand, to study conditions under which an operator satisfying such a representation is the minimal supersolution or a solution of a BSDE.

Dual representation of solutions of BSDE with quadratic growth in the control variable, linear growth in the value process and bounded terminal condition are by now well understood, see for instance \citet{barrieu01} and \citet{ravanelli01}.

In this work we give the dual representation of the minimal supersolution functional of a BSDE in the framework of \citet{DHK1101}.  
The $\mathcal{H}^1$-$L^\infty$ duality turns out to be the right candidate to constitute the basis of our representation.
As a starting point, we consider the set of essentially bounded terminal conditions. In this case, we obtain a dual representation of the minimal supersolution at time $0$ and a pointwise robust representation in the dynamic case.
We show that when the generator of the equation is decreasing in the value process, the minimal supersolution defines a time consistent cash-subadditive risk measure.
It allows for a dual representation on the space of essentially bounded random variables, which agrees with the representation of \citet{ravanelli01} obtained for BSDE solutions.
Our dual representation is obtained by showing that the representation of \citet{ravanelli01} can be restricted on a smaller set.
Then we can use truncation and approximation arguments to obtain the representation in the general case, due to monotone stability of minimal supersolutions. 
A direct consequence of our representation is the identification of BSDEs solution and minimal supersolution in the case of linear growth generators.
Note that our truncation technique appears already in the work of \citet{Del-Peng-Ros} where it is used to construct a sequence of $\mu$-dominated risk measures. 
Furthermore, prior to us \citet{barrieu01} and \citet{Bion} already used the $BMO$-martingale theory in the study of financial risk measures, but in different settings from ours.
Using standard convex duality arguments such as the Fenchel-Moreau theorem and the properties of the Fenchel-Legendre transform of a convex functional, we extend our dual representation to the set of random variables that can be identified to $\mathcal{H}^1$-martingales. 
Notice that this representation is obtained in the static case.

Our representation results can be seen as extensions of the dual representation of the minimal super-replicating cost of  \citet{EQ95} to the case where we allow for a nonlinear cost function in the dynamics of the wealth process.

The second theme of this work is to give conditions based on convex duality under which a dynamic cash-subadditive risk measure with a given representation can be seen as the solution, or the minimal supersolution of a BSDE. 
The cash-additive case has been studied by \citet{BDH10}. 
Their results are based on $m$-stability of the dual space, some supermartingale property and Dood-Meyer decomposition of the risk measure. 
We shall show that in the cash-subadditive case, discounting the risk measure yields similar results, hence showing an equivalent relationship between existence of the minimal supersolution and the dual representation.

The rest of the paper is structured as follows:
The next section is dedicated to the setting of the probabilistic framework of our study. We also introduce the notation and gather some results on minimal supersolution of BSDEs. Our representation results are stated and proved in Section \ref{Sec:representation}. The question of deriving a BSDE from the representation is dealt with in the last section.

\section{Minimal Supersolution of Convex BSDEs}
Given a fixed time horizon $T>0$, let $(\Omega,\mathcal{F},(\mathcal{F}_t)_{t \in [0,T]},P)$ be a filtrated probability space.
We assume that the filtration $(\mathcal{F}_t)$ is generated by a $d$-dimensional Brownian motion $W$ and it satisfies the usual conditions.
We further assume that $\mathcal{F}_T=\mathcal{F}$.
The set of $\mathcal{F}_t$ measurable random variables is denoted by $L^0_t$ where random variables are identified in the $P$-almost sure sense.
For $1\leq p < \infty$, we denote by $L^p_t$ the set of random variables in $L^0_t$ which are $p$-integrable and set $L^p = L^p_T$, and $L^\infty$ is the set of essentially bounded random variables in $L^0_T$.
Statements concerning random variables or processes like inequalities and equalities are to be understood in the $P$-almost sure or $P\otimes dt$-almost sure sense, respectively.
The set of stopping times with values in $[0,T] $ is denoted by $\mathcal{T}$.
We consider the sets of processes 
\begin{align*}
	\mathcal{S}   &:=\Set{Y:\Omega\times [0,T] \to \mathbb{R} ; Y \text{ is adapted and \cadlag}};\\
	\mathcal{L}   &:=\Set{Z: \Omega\times [0,T] \to \mathbb{R}^d; Z\text{ is predictable, and }\int_{0}^{T}\norm{ Z_s }^2ds <+\infty};
\end{align*}
\begin{align*}
	\mathcal{H}^p &:=\Set{X \in \mathcal{S} : X\text{ is a continuous martingale with }\sup_{t \in [0,T]}\abs{X_t}\in L^p};\\
	BMO      &:=\Set{M:M\in \mathcal{H}^1 \text{ such that } \norm{ M }_{BMO} < \infty},
\end{align*}
where $\norm{ M }_{BMO} := \sup_{\tau\in \mathcal{T}}\Vert E\left[ \langle M \rangle_T - \langle M \rangle_{\tau}\mid \mathcal{F}_{\tau} \right]^{\frac{1}{2}} \Vert_{\infty}$.
The set $\mathcal{H}^1_+$ denotes the set of non-negative martingales in $\mathcal{H}^1$.
Further, let $L^\infty_+$ and $L^\infty_{++}$ be the sets of non-negative and strictly positive random variables in $L^\infty$, respectively.
Notice that $X_t=E[X_T\mid \mathcal{F}_t]$ for all $0\leq t\leq T$ and every $X \in \mathcal{H}^1$.
Therefore, $\mathcal{H}^1$  will be identified with the set of random variables $X \in L^1$, satisfying $\sup_{t\in [0,T]}\abs{E[X \mid \mathcal{F}_t]}\in L^1$.
The dual of the Banach space $\mathcal{H}^1$ can be identified with $BMO$, see \citet[Theorem 2.6]{Kazamaki01}.

We further consider the sets
\begin{align*}
	 \mathcal{Q}&:=\Set{q \in \mathcal{L}:\exp\left( \int_{0}^{T}q_u dW_u -\frac{1}{2}\int_{0}^{T}\norm{q_u}^2 du \right)\in L^\infty},\\
	\mathcal{D} &:=\Set{\beta: \Omega\times [0,T] \to \mathbb{R}; \beta \text{ predictable, $\int_0^T\beta^-_udu \in L^\infty$ and } \int_0^T\beta_u^+\,du < \infty }.
\end{align*}
In our setting, the dual variables will appear to be closely related to the sets $\mathcal{D} $ and $\mathcal{Q} $.
The idea of defining the set $\mathcal{Q}$ with stochastic exponentials in $L^\infty$ is motivated by the fact that the representation will rely on the $\mathcal{H}^1$-$L^\infty$ duality.
For $q \in \mathcal{Q}$, we denote by $Q^q$ the probability measure whose density process is given by the stochastic exponential $M^q := \exp( \int_{}^{}q_u dW_u -\frac{1}{2}\int_{}^{}q_u^2 du )$ and for $\beta \in \mathcal{D}$ we denote by $D^\beta_{s,t}:=\exp(-\int_s^t \beta_u du)$, $0\leq s\leq t\leq T$ the discounting factors with respect to $\beta$.
In the case where $\beta \in \mathcal{D}_+ := \set{\beta \in \mathcal{D}: \beta \ge 0} $, the measures with density $M^q_tD^\beta_{0,t}$ was referred to by \citet{ravanelli01} as subprobability measures.

A generator is a jointly measurable function $g:\Omega\times [0,T]\times \mathbb{R} \times \R^d\to (-\infty,+\infty]$ where $\Omega \times [0,T]$ is endowed with the predictable $\sigma$-field, and such that $(y,z)\mapsto g_t(\omega,y,z)$ is $P\otimes dt$-almost surely lower semicontinuous.
We denote by $g^\ast$ the pointwise Fenchel-Legendre transform of $g$, that is
\begin{equation*}
	g^\ast_t(\omega, \beta,q)=\sup_{(y,z)\in \mathbb{R}\times \R^d}\Set{-y\beta+qz-g_t(\omega,y,z)}, \quad (\beta,q) \in \R\times \R^d,
	\label{}
\end{equation*}
where the scalar product between two vectors $q, z \in \mathbb{R}^d$ is denoted by $qz: = q\cdot z$. 
For any $(\beta,q) \in \mathbb{R}\times \mathbb{R}^d$, the process $g^\ast(\beta, q)$ is predictable, see \citet[Proposition 14.40]{rockafellar02}.


Following \citet{DHK1101}, a supersolution of the BSDE with terminal condition $X \in L^0$ and driver $g$ is defined as a couple $(Y,Z) \in \mathcal{S}\times \mathcal{L}$ such that 
\begin{equation}
	\begin{cases}
		\displaystyle Y_s-\int_{s}^{t}g_u(Y_u,Z_u)du+\int_{s}^{t}Z_u dW_u\geq Y_t, \quad \text{for every} \quad 0\leq s\leq t\leq T\\
		\displaystyle Y_T\geq X.
	\end{cases}
	\label{eq:supersolutions}
\end{equation}
The following equivalent formulation of \eqref{eq:supersolutions} will sometimes be useful: a pair $(Y,Z)$ is a supersolution if and only if there exists a c\`adl\`ag, increasing and adapted process $K$ with $K_0 =0$ such that
\begin{equation}
	Y_t= X + \int_{t}^{T}g_u(Y_u,Z_u)du +  (K_T - K_t)-\int_{t}^{T}Z_u dW_u, \quad \text{for every}\quad 0\leq t\leq T.
	\label{eq:equiv_supersol}
\end{equation}
The control process $Z$ of a supersolution $(Y,Z)$ is said to be admissible if the continuous local martingale $\int Z\,dW$ is a supermartingale.
Given a driver $g$ we define 
\begin{equation*}
	\mathcal{A}\left(X\right):=\Set{(Y,Z)\in \mathcal{S}\times \mathcal{L}:(Y,Z)\text{ fulfills \eqref{eq:supersolutions} and $Z$ is admissible}}, \quad X \in L^0.
	\label{eq:set:supersolutions}
\end{equation*}
A supersolution $(\bar{Y},\bar{Z})\in \mathcal{A}(X)$ is said to be minimal if $\bar{Y}\leq Y$ for every $(Y,Z)\in \mathcal{A}(X)$.
A generator $g$ is said to be 
\begin{enumerate}[label=(\textsc{Pos}),leftmargin=40pt]
	\item positive, if $g\geq 0$;\label{pos}
\end{enumerate}
\begin{enumerate}[label=(\textsc{Dec}),leftmargin=40pt]
	\item decreasing, if $g(y,z)\leq g(y^\prime,z)$ whenever $y\geq y^\prime$;\label{dec}
\end{enumerate}
\begin{enumerate}[label=(\textsc{Conv}),leftmargin=40pt]
	\item convex, if $(y,z)\mapsto g(y,z)$ is convex;\label{jconv}
\end{enumerate}
\begin{enumerate}[label=(\textsc{Lsc}),leftmargin=40pt]
	\item lower semicontinuous, if $(y,z)\mapsto g(y,z)$ is lower semicontinuous.\label{lsc}
\end{enumerate}
\begin{theorem}
\label{thm:existence}
	Let $g$ be a driver satisfying \ref{jconv}, \ref{lsc} and \ref{pos}.
	For any $X \in \mathcal{X}: = \set{X \in L^0: X^- \in L^1}$ such that $\mathcal{A}(X)\neq \emptyset$, there exists a unique minimal supersolution $(\bar{Y},\bar{Z})\in \mathcal{A}(X)$ which satisfies
	\begin{equation*}
		\bar{Y}_t = \essinf\Set{ Y_t: (Y,Z) \in \mathcal{A}(X)}\quad \text{for all } t \in [0,T].
	\end{equation*}
\end{theorem}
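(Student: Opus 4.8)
The plan is to take $\bar{Y}_t := \essinf\Set{Y_t : (Y,Z)\in\mathcal{A}(X)}$ as the candidate value process and to exhibit an admissible control $\bar{Z}$ making $(\bar{Y},\bar{Z})$ a supersolution; minimality and the stated $\essinf$ formula then hold by construction. Before anything else I would record an a priori lower bound guaranteeing that the $\essinf$ is well defined and not $-\infty$: evaluating \eqref{eq:supersolutions} at $t=T$ and using $Y_T\geq X$ together with positivity \ref{pos} gives $Y_s\geq X-\int_s^T Z_u\,dW_u$, and since $Z$ is admissible the integral is a supermartingale, so taking $\mathcal{F}_s$-conditional expectations yields $Y_s\geq E[X\mid\mathcal{F}_s]\geq -E[X^-\mid\mathcal{F}_s]$. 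As $X^-\in L^1$ this is an integrable lower bound uniform over $\mathcal{A}(X)$, so $\bar{Y}$ is a well-defined adapted process bounded below by $-E[X^-\mid\mathcal{F}_\cdot]$.

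The next step is to show that the family $\Set{Y_t:(Y,Z)\in\mathcal{A}(X)}$ is downward directed, which upgrades the $\essinf$ to a pointwise decreasing limit along a sequence. Given two supersolutions I would paste them at the stopping time where one value process first strictly exceeds the other, taking the pointwise minimum; positivity \ref{pos} ensures that the downward jump introduced at the switching time can be absorbed into the increasing process $K$ of the equivalent formulation \eqref{eq:equiv_supersol}, so the pasted pair is again an element of $\mathcal{A}(X)$ with value process $\leq$ the minimum of the two. Directedness then furnishes a sequence $(Y^n,Z^n)\in\mathcal{A}(X)$ with $Y^n_t\downarrow\bar{Y}_t$ for every $t$.

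The heart of the argument, and the step I expect to be hardest, is the compactness and closedness needed to produce the limiting control. Using the a priori bounds to control $\int_0^T\norm{Z^n_u}^2\,du$ and admissibility, I would invoke a compactness principle for the stochastic integrals $\int Z^n\,dW$ (of Delbaen--Schachermayer / Koml\'os type) to extract forward convex combinations $\tilde{Z}^n\in\mathrm{conv}(Z^n,Z^{n+1},\dots)$ whose integrals converge; here convexity \ref{jconv} is essential, since it guarantees that a convex combination of supersolutions is again a supersolution, while the combined value processes remain squeezed between $\bar{Y}$ and $Y^n$ and hence still decrease to $\bar{Y}$. Passing to the limit in \eqref{eq:supersolutions} along these convex combinations, the generator term is handled from below by Fatou's lemma together with lower semicontinuity \ref{lsc} and convexity of $g$, and the martingale term by the chosen convergence, so that $(\bar{Y},\bar{Z})$ satisfies \eqref{eq:supersolutions}; a separate check that $\int\bar{Z}\,dW$ inherits the supermartingale property shows $\bar{Z}$ is admissible, i.e. $(\bar{Y},\bar{Z})\in\mathcal{A}(X)$.

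Finally, uniqueness is immediate: if $(\bar{Y},\bar{Z})$ and $(\bar{Y}',\bar{Z}')$ are both minimal then $\bar{Y}\leq\bar{Y}'$ and $\bar{Y}'\leq\bar{Y}$, so the value processes coincide; writing \eqref{eq:equiv_supersol} for both and matching the continuous martingale parts forces $\int(\bar{Z}-\bar{Z}')\,dW\equiv 0$, whence $\bar{Z}=\bar{Z}'$ by uniqueness of the martingale representation.
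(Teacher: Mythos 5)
Your proposal follows essentially the same route as the paper's own (sketched) proof, which defers the details to \citet{DHK1101}: the a priori bound $Y_t \ge -E[X^-\mid\mathcal{F}_t]$ from \ref{pos} and admissibility, concatenation of supersolutions to obtain a decreasing sequence, Delbaen--Schachermayer-type compactness applied to convex combinations of the controls (where \ref{jconv} and \ref{lsc} let Fatou's lemma close the limit), and uniqueness of $\bar{Z}$ via the martingale representation theorem. The only point you gloss over that the paper flags explicitly is the need to pass to a c\`adl\`ag modification of the pointwise $\essinf$ process, but this is a standard supermartingale-regularization step and does not change the argument.
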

\begin{proof}
	See Appendix \ref{appendix01}.
\end{proof}
For a generator $g$ which satisfies \ref{jconv}, \ref{lsc} and \ref{pos} we define the operator $\mathcal{E}: \mathcal{X} \to \mathcal{S} \cup \set{\infty} $ as
\begin{align*}
	\mathcal{E} :  X \longmapsto \begin{cases}
		\bar{Y} &\text{ if } \mathcal{A}(X)\neq \emptyset\\
		+\infty & \text{ else},
	\end{cases}
\end{align*}
where $\bar{Y}$ is defined in Theorem \ref{thm:existence} and depends on $X$.
 We conclude this section by the following structural properties and stability results for $\mathcal{E}$. 
\begin{proposition}
	\label{thm:properties}
	Let $g$ satisfying \ref{jconv}, \ref{lsc} and \ref{pos},  let $X, X' \in L^0$ and $m \in \mathbb{R}$. It holds
	\begin{enumerate}[label=\textit{(\roman*)}]
		\item \emph{Monotonicity:} if $X' \leq X$ then $\mathcal{E}(X') \leq \mathcal{E}(X)$;
		\item \emph{Convexity:} $\mathcal{E}_0(\lambda X + (1-\lambda)X') \leq \lambda\mathcal{E}_0(X) + (1-\lambda)\mathcal{E}_0(X')$, for all $\lambda \in (0,1)$;
		\item\label{point3} \emph{Cash-subadditivity:} if  $g$ is \ref{dec} and $m\geq 0$,
			then $\mathcal{E}_0(X + m) \leq \mathcal{E}_0(X) + m$;
		\item\label{point4} \emph{Cash-additivity:} if  $g:(y,z) \mapsto g(z)$, 
			then: $\mathcal{E}_0(X + m) = \mathcal{E}_0(X) + m$;
		\item\label{point5} \emph{Normalization:} for every $y \in \mathbb{R} $ such that  $g(y,0) = 0$ it holds $\mathcal{E}_0(y) = y$.
	\end{enumerate}
	Furthermore, for any sequence of random variables $(X_n)\subseteq L^0$ such that $\inf_n X_n \in L^1$, it holds
	\begin{enumerate}[resume*]
		\item \emph{Monotone convergence:} $\lim \mathcal{E}_0(X_n)=\mathcal{E}_0(X)$ whenever $(X_n)$ is increasing and converges $P$-a.s. to $X \in L^0$;
		\item \emph{Fatou:} $\mathcal{E}_0(\liminf X_n)\leq \liminf\mathcal{E}_0(X_n)$.
	\end{enumerate}
	As a restriction on $L^1$ the operator $\mathcal{E}_0$ is $L^1$-lower semicontinuous.
\end{proposition}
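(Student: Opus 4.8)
The plan is to read off the order-theoretic properties (i)--(v) directly from the variational formula $\bar Y_t=\essinf\Set{Y_t:(Y,Z)\in\mathcal{A}(X)}$ of Theorem~\ref{thm:existence}, by producing suitable candidate supersolutions, and then to obtain the stability statements (vi)--(viii) from the monotone stability of minimal supersolutions of \citet{DHK1101}, which is the only genuinely hard ingredient.

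For \emph{monotonicity}, if $X'\le X$ then every $(Y,Z)\in\mathcal{A}(X)$ satisfies $Y_T\ge X\ge X'$, so $\mathcal{A}(X)\subseteq\mathcal{A}(X')$; passing to the essential infimum over the larger set gives $\mathcal{E}(X')\le\mathcal{E}(X)$, the case $\mathcal{A}(X)=\emptyset$ being trivial. For \emph{convexity}, given $(Y,Z)\in\mathcal{A}(X)$ and $(Y',Z')\in\mathcal{A}(X')$ I would verify that $(\lambda Y+(1-\lambda)Y',\lambda Z+(1-\lambda)Z')\in\mathcal{A}(\lambda X+(1-\lambda)X')$: the terminal inequality is immediate, the supersolution inequality \eqref{eq:supersolutions} follows from convexity \ref{jconv} of $g$ applied pointwise under the $du$-integral, and admissibility is preserved because a nonnegative combination of supermartingales is a supermartingale. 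Evaluating at $t=0$ and taking the infimum over both families yields the claim. The cash properties rely on a vertical shift: under \ref{dec} and $m\ge0$, monotonicity of $g$ gives $g_u(Y_u+m,Z_u)\le g_u(Y_u,Z_u)$, so $(Y+m,Z)\in\mathcal{A}(X+m)$ whenever $(Y,Z)\in\mathcal{A}(X)$, which gives \emph{cash-subadditivity}; when $g$ does not depend on $y$ the generator term is unchanged by the shift for arbitrary $m\in\R$, and applying the same construction to $X+m$ with shift $-m$ produces the reverse inequality, hence \emph{cash-additivity}. For \emph{normalization}, if $g(y,0)=0$ then the constant pair $(y,0)$ satisfies \eqref{eq:supersolutions} with terminal value $y$ and equality, so $\mathcal{E}_0(y)\le y$; the matching bound $\mathcal{E}_0(y)\ge y$ comes from taking expectations in \eqref{eq:equiv_supersol}, using positivity \ref{pos} of $g$, the fact that $K$ is increasing with $K_0=0$, and admissibility to discard the (super)martingale part.

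The substantial point is \emph{monotone convergence} (vi). Monotonicity already gives $\lim_n\mathcal{E}_0(X_n)\le\mathcal{E}_0(X)$, and the hypothesis $\inf_n X_n\in L^1$ guarantees that every $X_n$ and $X$ belong to $\mathcal{X}$. The reverse inequality is precisely the content of the monotone stability theorem of \citet{DHK1101}: the increasing family of minimal supersolutions attached to $(X_n)$ converges to a supersolution with terminal condition $X$, which forces $\mathcal{E}_0(X)\le\lim_n\mathcal{E}_0(X_n)$. This closedness step is the main obstacle, and I would invoke it from \citet{DHK1101} rather than reprove it.

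Finally, \emph{Fatou} (vii) follows by the standard reduction: set $Y_k:=\inf_{n\ge k}X_n$, an increasing sequence with $Y_k\ge\inf_n X_n\in L^1$ converging to $\liminf_n X_n$; monotonicity gives $\mathcal{E}_0(Y_k)\le\inf_{n\ge k}\mathcal{E}_0(X_n)$, and letting $k\to\infty$ via (vi) yields $\mathcal{E}_0(\liminf_n X_n)\le\liminf_n\mathcal{E}_0(X_n)$. The $L^1$-lower semicontinuity then comes from a subsequence argument: given $X_n\to X$ in $L^1$, I would select a subsequence attaining $\liminf_n\mathcal{E}_0(X_n)$ and a further almost surely convergent subsequence admitting an $L^1$-integrable envelope (so that the infimum of its terms lies in $L^1$), and apply (vii) to conclude $\mathcal{E}_0(X)\le\liminf_n\mathcal{E}_0(X_n)$.
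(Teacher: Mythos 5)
Your proposal is correct and follows essentially the same route as the paper: the paper proves (iii)--(v) by exactly your shift arguments ($(Y+m,Z)\in\mathcal{A}(X+m)$ under \ref{dec}, the reverse shift when $g$ is independent of $y$, and the constant pair $(y,0)$ together with the supermartingale property of $Y$ coming from \ref{pos} and admissibility), and it refers the remaining points --- monotonicity, convexity, monotone convergence, Fatou and $L^1$-lower semicontinuity --- to \citet{DHK1101}, which is precisely the monotone-stability result you also invoke as the one nontrivial ingredient.
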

\begin{proof}
	See Appendix \ref{appendix01}.
\end{proof}

\section{Dual Representation}\label{Sec:representation}
\subsection{The Bounded Case} 
The following proposition provides the dual representation of $g$-expectations, see also \citep[Proposition 3.3]{karoui01}.
Note that such a representation was already obtained in \citep{ravanelli01} in the more general quadratic case, where the value function of the BSDE was written as a supremum over a set of measures with uniformly integrable densities.
Here, we show that under the linear growth assumption the representation can be restricted to a set of measures with densities in $L^\infty$.
\begin{proposition}
	\label{g_expectation}
	Let $X \in L^{\infty}$ and $f$ be a driver satisfying \ref{jconv}, \ref{lsc} and \ref{pos}, as well as the linear growth condition
	\begin{equation*}
		 f(y,z) \leq a + b \abs{ y } + c \norm{ z }, \quad a,b,c >0.
	\end{equation*}
	Then the solution $(Y,Z)$ of the BSDE
	\begin{equation}
		\label{eq:BSDE-f-bounded}
		Y_t = X + \int_t^Tf_u(Y_u,Z_u)\,du - \int_t^TZ_u\,dW_u, \quad t\in [0,T],
	\end{equation}
	admits the dual representation
	\begin{equation}
		Y_t = \esssup_{(\beta,q) \in \mathcal{D}\times \mathcal{Q}}E_{Q^q}\left[ D_{t,T}^\beta X - \int_t^T D_{t,u}^\beta f^\ast_u(\beta_u,q_u)\,du \mmid \mathcal{F}_t \right], \quad t\in [0,T].
	\end{equation}
\end{proposition}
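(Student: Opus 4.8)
The plan is to prove the two inequalities separately: the lower bound $Y_t \ge \esssup_{(\beta,q)}(\cdots)$ by a linearisation argument, and the reverse bound by exhibiting an (approximately) optimal dual element. For the lower bound, fix $(\beta,q) \in \mathcal{D}\times\mathcal{Q}$. By Girsanov's theorem $W^q_s := W_s - \int_0^s q_u\,du$ is a Brownian motion under $Q^q$, so $dW_s = dW^q_s + q_s\,ds$. Applying It\^o's product rule to the discounted process $s \mapsto D^\beta_{0,s}Y_s$, using the dynamics \eqref{eq:BSDE-f-bounded}, and dividing by $D^\beta_{0,t}$ yields
\begin{equation*}
	Y_t = D^\beta_{t,T}X + \int_t^T D^\beta_{t,s}\brak{f_s(Y_s,Z_s) + \beta_s Y_s - q_s Z_s}\,ds - \int_t^T D^\beta_{t,s}Z_s\,dW^q_s.
\end{equation*}
Since $f$ satisfies \ref{jconv} and \ref{lsc}, the Fenchel--Moreau theorem gives $f_s(y,z) = \sup_{(\beta,q)}\set{-\beta y + qz - f^*_s(\beta,q)}$, so the integrand above is bounded below by $-f^*_s(\beta_s,q_s)$. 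Because $X$ is bounded and the generator has linear growth, the control $Z$ is such that $\int Z\,dW$ is a $BMO$-martingale, which remains a martingale under the equivalent measure $Q^q$ for bounded $q$; hence $\int_t^\cdot D^\beta_{t,s}Z_s\,dW^q_s$ has vanishing $Q^q$-conditional expectation given $\mathcal{F}_t$. Taking $E_{Q^q}[\,\cdot\mid\mathcal{F}_t]$ therefore produces $Y_t \ge E_{Q^q}[D^\beta_{t,T}X - \int_t^T D^\beta_{t,s}f^*_s(\beta_s,q_s)\,ds \mid \mathcal{F}_t]$, and passing to the essential supremum over $(\beta,q)$ gives the lower bound.

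For the converse I would realise equality in the Fenchel inequality by selecting, for $P\otimes dt$-almost every $(\omega,s)$, a maximiser $(\beta^*_s,q^*_s)$ of $(\beta,q)\mapsto -\beta Y_s + qZ_s - f^*_s(\beta,q)$, i.e.\ an element of the subdifferential of $f_s$ at $(Y_s,Z_s)$; a measurable selection argument in the spirit of \citet[Proposition 14.40]{rockafellar02} provides a predictable version of $(\beta^*,q^*)$. With this choice the displayed identity holds with equality before the stochastic integral is discarded, so that the corresponding dual value equals $Y_t$ exactly, which would yield the reverse inequality.

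The main obstacle is \emph{admissibility}: one must check that the optimiser, or a suitable approximation of it, lies in $\mathcal{D}\times\mathcal{Q}$, and here the linear growth condition is decisive. It forces the effective domain of $f^*$ into $\set{(\beta,q): \abs{\beta}\le b,\ \norm{q}\le c}$ — indeed, sending $\abs{y}\to\infty$ (resp.\ $\norm{z}\to\infty$) in the definition of $f^*$ shows $f^*_s(\beta,q)=+\infty$ as soon as $\abs{\beta}>b$ (resp.\ $\norm{q}>c$). Consequently $\beta^*$ is bounded, so $\beta^*\in\mathcal{D}$ automatically. The delicate point is that a bounded $q^*$ need not satisfy $M^{q^*}_T\in L^\infty$, i.e.\ need not belong to $\mathcal{Q}$; this is precisely the gap between the representation of \citet{ravanelli01}, a supremum over measures with merely uniformly integrable densities, and the sharper statement claimed here.

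To close that gap I would localise: setting $q^{*,n} := q^*\mathbf{1}_{[0,\tau_n]}$ for stopping times $\tau_n$ reducing $\int q^*\,dW$ renders $M^{q^{*,n}}_T$ bounded, hence $q^{*,n}\in\mathcal{Q}$, and then I would pass to the limit $n\to\infty$ in the dual functional. Convergence of the dual values to $Y_t$ would follow from the boundedness of $X$, $\beta^*$ and $q^*$ together with the $BMO$ property of $Z$, which supply the uniform integrability needed to interchange the limit with the $Q^{q^{*,n}}$-expectation and with the discounting. This localisation and approximation step, rather than the convex duality itself, is where the genuine work lies.
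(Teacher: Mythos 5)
Your proposal is correct and follows essentially the same route as the paper: linearisation via It\^o/Girsanov plus the Fenchel inequality for the lower bound, a measurable selection of a subgradient $(\bar\beta,\bar q)$ with $\abs{\bar\beta}\le b$, $\norm{\bar q}\le c$ for the reverse bound, and then exactly the paper's localisation $\sigma^n=\inf\set{s:\abs{\int_0^s\bar q_u\,dW_u}\ge n}\wedge T$ to force the density into $L^\infty$, with uniform integrability supplied by the boundedness of the truncated densities and the $BMO$ property of $\int Z\,dW$. You correctly identified the localisation-and-passage-to-the-limit step as the substantive part of the argument.
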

Before going through the proof, let us provide the following well known lemma, see \citep{ravanelli01}.
\begin{lemma}
	\label{linear_growth}
	Let $f: \R \times \R^d \to (-\infty,\infty]$ be a function satisfying \ref{lsc}, \ref{jconv} as well as
	\begin{equation*}
		\abs{ f(y,z) } \leq a + b\abs{ y } + c\norm{ z }, \quad (y,z) \in \mathbb{R}\times\mathbb{R}^d
	\end{equation*}
	for some positive constants $a, b$ and $c$.
	Then, $f$ admits for all $(y,z)\in \R\times \R^d$ the dual representation 
	\begin{equation}\label{eq:blablabla}
		f(y,z) = \max_{\beta\in \R,q\in \R^d}\Set{ -\beta y + qz - f^\ast(\beta,q) }=-\bar{\beta} y+\bar{q}z-f^\ast(\bar{\beta},\bar{q})
	\end{equation}
	for some $\abs{ \bar{\beta} } \leq b$ and $\norm{ \bar{q} } \leq c$.
\end{lemma}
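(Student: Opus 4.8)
The plan is to recognize that the claimed identity is essentially the Fenchel--Moreau theorem combined with a localization of the effective domain of $f^\ast$ forced by the linear growth bound. First I would observe that the two-sided estimate $\abs{f(y,z)} \leq a + b\abs{y} + c\norm{z}$ makes $f$ finite-valued on all of $\R\times\R^d$; being convex and everywhere finite, $f$ is in fact continuous, hence proper, convex and lower semicontinuous. The Fenchel--Moreau theorem then yields $f = f^{\ast\ast}$, that is
\[
	f(y,z) = \sup_{\beta\in\R,\,q\in\R^d}\Set{-\beta y + qz - f^\ast(\beta,q)},
\]
which already gives the representation as a supremum. It then remains to show that this supremum may be restricted to, and is attained on, the compact box $C := \Set{(\beta,q)\in\R\times\R^d : \abs{\beta}\leq b,\ \norm{q}\leq c}$.

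For the domain estimate I would prove that $f^\ast(\beta,q) = +\infty$ whenever $\abs{\beta} > b$ or $\norm{q} > c$, so that $\mathrm{dom}\,f^\ast \subseteq C$. To control $\beta$, I would insert $z=0$ into the definition of $f^\ast$ and use $f(y,0)\leq a + b\abs{y}$ to get $f^\ast(\beta,q) \geq \sup_{y}\Set{-\beta y - a - b\abs{y}}$; sending $y\to-\infty$ when $\beta>b$ (the linear term then behaves like $(b-\beta)y\to+\infty$) and $y\to+\infty$ when $\beta<-b$ makes the right-hand side $+\infty$, since in each case the coefficient of the dominating linear term has the favourable sign. Symmetrically, to control $q$, I would take $y=0$ and test with $z = t\,q/\norm{q}$ for $t>0$, using $f(0,z)\leq a + c\norm{z}$ to obtain $f^\ast(\beta,q)\geq \sup_{t>0}\Set{t(\norm{q}-c) - a}$, which is $+\infty$ as soon as $\norm{q} > c$.

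Finally, for attainment I would use that $f^\ast$, being a Fenchel conjugate, is automatically lower semicontinuous, so the map $(\beta,q)\mapsto -\beta y + qz - f^\ast(\beta,q)$ is upper semicontinuous and equals $-\infty$ outside the compact set $C$. An upper semicontinuous function attains its maximum on $C$, and since $f=f^{\ast\ast}$ is finite this maximum is finite and equals $f(y,z)$; its maximizer $(\bar\beta,\bar q)$ lies in $C$, which gives $\abs{\bar\beta}\leq b$ and $\norm{\bar q}\leq c$ as asserted. The only genuinely delicate point is the domain localization: one must choose the test directions so as to exploit exactly the growth coefficients $b$ and $c$, after which the passage from $\sup$ to $\max$ is a routine compactness argument.
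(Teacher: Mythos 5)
Your proposal is correct and follows essentially the same route as the paper: Fenchel--Moreau for the biconjugate identity, a growth estimate showing $f^\ast(\beta,q)=+\infty$ outside the box $\abs{\beta}\le b$, $\norm{q}\le c$, and then lower semicontinuity of $f^\ast$ on a compact effective domain to turn the supremum into a maximum. The only (immaterial) difference is in the test points used for the domain localization — you test the $y$- and $z$-directions separately at $z=0$ and $y=0$, whereas the paper uses the single family $y=-m\beta$, $z=nq$ — and both yield the same conclusion.
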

\begin{proof}
	We shortly present the argument.
	First, the dual representation of $f$ is a consequence of the Fenchel-Moreau theorem, since the growth condition implies that $f$ is proper.
	Second, the growth condition on $f$ implies $f^\ast(\beta,q)\geq -\beta y +qz-f(y,z)\geq -a -\beta y+qz -b\abs{y}-c\norm{z}$ for all $(y,z)\in \R \times \R^d$.
	In particular $f^\ast(\beta,q)\geq -a+m \abs{\beta}(\abs{\beta}-b)+n\norm{q}(\norm{q}-c)$ for every $n,m \in \mathbb{N}$, showing that $f^\ast(\beta,q)=\infty$ for all $b<\abs{\beta}$ or $c<\norm{q}$.
	Hence, the supremum in \eqref{eq:blablabla} can be restricted to $ \abs{\beta} \leq b$ and $ \norm{q}\leq c$.
	Finally, $f^\ast$ being lower semicontinuous and having a domain contained in a compact set, the supremum is therefore a maximum.
\end{proof}

\begin{proof} [Proof of Proposition \ref{g_expectation}]
	First notice that by Lemma \ref{linear_growth}, $f$ is globally Lipschitz, due to the boundedness of $\bar{q}$ and $\bar{\beta}$, which ensures existence and uniqueness of a strong solution for the BSDE with bounded terminal condition, see \citep{peng01}.
	Let $(\beta, q) \in \mathcal{D}\times\mathcal{Q} $.
	With the same arguments as in \citep{karoui01,ravanelli01}, using It\^o's formula applied to $D_{t,u}^\beta Y_u$ between $t$ and $T$ where $(Y,Z)$ is the solution of the Lipschitz BSDE with bounded terminal condition \eqref{eq:BSDE-f-bounded}, it holds
	\begin{align*}
		Y_t & = D_{t,T}^\beta X - \int_t^T D_{t,u}^\beta \left( -\beta_uY_u + q_uZ_u - f_u\left(Y_u,Z_u\right) \right)\,du - \int_t^T D_{t,u}^\beta Z_u\,dW_u^{Q^q}\\
		    & = E_{Q^q}\left[ D_{t,T}^\beta X - \int_t^T D_{t,u}^\beta \left( -\beta_uY_u + q_uZ_u - f_u\left(Y_u,Z_u\right) \right)\,du \mmid \mathcal{F}_t \right]
	\end{align*}
	for all $(\beta,q) \in \mathcal{D}\times \mathcal{Q}$.
	since $-\beta_uY_u + q_uZ_u - f_u(Y_u,Z_u)\leq f^\ast_u(\beta_u,q_u)$, it follows
		\begin{equation}
		\label{first_Y_ineq}
		Y_t\geq \esssup_{(\beta,q) \in \mathcal{D}\times \mathcal{Q}}E_{Q^q}\left[ D_{t,T}^\beta X - \int_t^T D_{t,u}^\beta f^\ast_u(\beta_u,q_u)\,du \mmid \mathcal{F}_t \right].
	\end{equation}
	For the other inequality, since $f$ satisfies the conditions of Lemma \ref{linear_growth}, for all $(\omega,t) \in \Omega \times [0,T]$ the subgradients $\partial f(\omega,t,Y_t,Z_t)$ with respect to $(Y_t, Z_t)$ are non-empty for all $(\omega,t) \in \Omega\times [0,T]$.
	Therefore, by means of \citep[Theorem 14.56]{rockafellar02}, we can apply a measurable selection theorem, see for instance \citep[Corollary 1C]{rockafellar03}, to assert the existence of a predictable $\mathbb{R}\times \mathbb{R}^d$-valued process $(\bar{\beta}, \bar{q})$ such that
	\begin{equation}
		\label{eq:f_attained}
		f(Y,Z) = -\bar{\beta}Y + \bar{q}Z - f^\ast(\bar{\beta}, \bar{q}),\quad P \otimes dt\text{-a.s.,}
	\end{equation}
	and $ \abs{\bar{\beta}} \leq b$ and $\norm{ \bar{q} } \leq c$.
	Hence,
	\begin{equation}
		\label{Y_q_bar}
		Y_t = E_{Q^{\bar{q}}}\left[ D_{t,T}^{\bar{\beta}} X - \int_t^T  D_{t,u}^{\bar{\beta}}f^\ast_u(\bar{\beta}_u,\bar{q}_u)\,du \mmid \mathcal{F}_t \right].
	\end{equation}
	But even though $\bar{q}$ is bounded it is not guaranteed that the density of $Q^{\bar{q}}$ belongs to $L^\infty$. 
	Thus, we introduce the following localization by defining
	\begin{equation*}
		\sigma^n := \inf\Set{ s >0: \abs{ \int_0^s\bar{q}_u\,dW_u } \geq n } \wedge T,\quad n\in \mathbb{N},
	\end{equation*}
	and put $\bar{q}^{n}:= \bar{q}1_{[0,\sigma^n]} \in \mathcal{Q}$ and $\bar{\beta}^{n} := \bar{\beta}1_{[0,\sigma^n]} \in \mathcal{D}$.
	Then, since $\norm{ \bar{q}_u } \leq c$, the density process of $Q^{\bar{q}^n}$ is bounded and the sequence of positive random variables $(D_{0,T}^{\bar{\beta}^n}dQ^{\bar{q}^n}/dP)$ converges $P$-almost surely to $D_{0,T}^{\bar{\beta}} dQ^{\bar{q}}/dP$.
	Furthermore, for any $p >1$ it holds
	\begin{equation}\label{bound_density} 
		 \begin{split}
			  E\left[ \abs{\frac{dQ^{\bar{q}^n}}{dP}}^p \right]&= E\left[\exp\left( \int_0^Tp\bar{q}^n_u dW_u - \frac{1}{2}\int_0^T \norm{p\bar{q}^n_u}^2du +\frac{p(p-1)}{2}\int_0^T \norm{\bar{q}^n_u}^2du\right)\right]\\
			  &\leq \exp\left( \frac{p(p-1)}{2}c^2T \right).
		 \end{split}
	\end{equation}
	Hence $(D_{0,T}^{\bar{\beta}^n}dQ^{\bar{q}^n}/dP)$ is uniformly integrable.
	Therefore, since $X$ is bounded it holds
	\begin{equation*}
		\lim_{n\rightarrow \infty}E_{Q^{\bar{q}^n}}\left[ D_{t,T}^{\bar{\beta}^n} X \Mid \mathcal{F}_t \right] = E_{Q^{\bar{q}}}\left[ D_{t,T}^{\bar{\beta}}X \Mid \mathcal{F}_t \right].
	\end{equation*}
	Let us show that
	\begin{equation} \label{eq:second_convergence}
		\lim_{n \rightarrow \infty} E_{Q^{\bar{q}^n}}\left[ \int_t^{T} D_{t,u}^{\bar{\beta}^n}f^\ast_u(\bar{\beta}^n_u,\bar{q}^n_u)\,du \mmid \mathcal{F}_t \right] = 
		E_{Q^{\bar{q}}}\left[ \int_t^{T}D_{t,u}^{\bar{\beta}} f^\ast_u(\bar{\beta}_u,\bar{q}_u)\,du \mmid \mathcal{F}_t \right].
	\end{equation}
	For almost all $\omega \in \Omega$ and $t\leq u\leq T$, by definition of $\bar{\beta}^n$ and $\bar{q}^n$, it holds $(\bar{\beta}^n_u(\omega),\bar{q}^n_u(\omega))=(\bar{\beta}_u(\omega), \bar{q}_u(\omega))$ for $n$ large enough.
	Hence, the sequence $(D_{t,u}^{\bar{\beta}^n}f^\ast_u(\bar{\beta}^n_u,\bar{q}^n_u))$ converges $P\otimes dt$-almost surely to $D_{t,u}^{\bar{\beta}} f^\ast_u(\bar{\beta}_u,\bar{q}_u)$.
	Since the processes $\bar{\beta}$ and $\bar{q}$ are bounded, by Equation \eqref{eq:f_attained} and the linear growth assumption on $f$, we can find two positive numbers $C_1$ and $C_2$ such that
	\begin{equation}
		\label{eq:estimate_f-ast}
		\int_0^T\abs{f_u^\ast(\bar{\beta}_u,\bar{q}_u)}\,du \leq C_1\int_0^T \abs{ Y_u} \,du + C_2\int_0^T\norm{ Z_u }\,du.  
	\end{equation}
	It is known that if $X$ is bounded and $f$ is Lipschitz, then the solution $(Y,Z)$ of the BSDE is such that $Y$ is bounded and $\int Z\,dW$ is in BMO, see for instance \citep{kobylanski01} and \citep[Proposition 7.3]{barrieu01}\footnote{Notice that in \citep{barrieu01} , the generator does not depend on $y$, but the same proof carries over to the general case as mentioned in \citep{ravanelli01}.}.
	Equation \eqref{eq:estimate_f-ast} and $BMO\subseteq \mathcal{H}^p$ for all $1\leq p< \infty$, see \citep{Kazamaki01}, together with  H\"older's inequality imply
	\begin{multline*}
		E\left[\left( \frac{dQ^{\bar{q}^n}}{dP}\int_0^T \abs{f^\ast_u(\bar{\beta}^n_u, \bar{q}^n_u)}\,du \right)^2 \right]\\
		\leq \tilde{C}_1E\left[ \left(\frac{dQ^{\bar{q}^n}}{dP} \right)^2 \right] + \tilde{C}_2 E\left[ \left( \frac{dQ^{\bar{q}^n}}{dP}\right)^4 \right]^{1/2}E\left[ \left(\int_0^T\norm{ Z_u }^2\,du \right)^2 \right]^{1/2} \leq C,
	\end{multline*}
	where $C$ is a positive real number independent of $n$.
	Recalling that $D^{\beta^n}$ is bounded, we get the required uniform integrability to derive \eqref{eq:second_convergence}.
	Now, from Equation \eqref{Y_q_bar} and since $f^\ast$ is positive, we obtain
	\begin{align*}
		Y_t  & = E_{Q^{\bar{q}}}\left[ D_{t,T}^{\bar{\beta}} X - \int_t^{T} D_{t,u}^{\bar{\beta}}f^\ast_u(\bar{\beta}_u,\bar{q}_u)\,du \mmid \mathcal{F}_t \right]\\
			 & = \lim_{n \rightarrow \infty}E_{Q^{\bar{q}^n}}\left[ D_{t,T}^{\bar{\beta}^n} X -  \int_t^{T} D_{t,u}^{\bar{\beta}^n}f^\ast_u(\bar{\beta}^n_u,\bar{q}^n_u)\,du \mmid \mathcal{F}_t \right] \\
			 & \leq \esssup_{(\beta,q) \in \mathcal{D}\times \mathcal{Q}}E_{Q^q}\left[ D_{t,T}^\beta X - \int_t^T D_{t,u}^\beta f^\ast_u(\beta_u,q_u)\,du \mmid \mathcal{F}_t \right].
	\end{align*}
	Together with Equation \eqref{first_Y_ineq}, this concludes the proof.
\end{proof}
\begin{remark}
	Equation \eqref{Y_q_bar} enables us already to obtain the representation of the $g$-expectation with respect to measure with square integrable densities. 
	This is a well-known result.
	The role of the subsequent localization procedure is to prove that the representation can, in fact, be written with respect to measures with bounded densities.
	This turns out to be important for the representation in the non-bounded case, since we work on the $\mathcal{H}^1$-$L^\infty$ duality.
\end{remark}
Considering a more general driver, we can build on the result above to represent the minimal supersolution functional defined on the set of essentially bounded random variables.
\begin{theorem}
	\label{thm:rep-bounded1}
	Let $g$ be a driver satisfying \ref{jconv}, \ref{lsc} and \ref{pos}.
	Then, the operator $\mathcal{E}_0: L^{\infty} \rightarrow \mathbb{R} \cup \set{\infty}$ admits the dual representation
	\begin{equation}
		\label{eq:rep-bounded-0}
		\mathcal{E}_0(X) = \sup_{(\beta,q) \in \mathcal{D}\times \mathcal{Q}}\Set{E_{Q^q}\left[ D_{0,T}^\beta X \right]- \alpha_{0,T}(\beta,q)},
	\end{equation}
	where the penalty function $\alpha$ is given by
	\begin{equation}
		\alpha_{t,s}(\beta,q):=E_{Q^q}\left[ \int_{t}^{s} D_{t,u}^\beta g^\ast_u(\beta_u,q_u)\,du \mmid \mathcal{F}_t \right],\quad (\beta,q) \in \mathcal{D}\times \mathcal{Q}
		\label{eq:rep-bounded-penalty}
	\end{equation}
	for every $0\leq t\leq s\leq T$.
	
	In addition, for any $t \in [0,T]$, and $X \in L^\infty$ such that $\mathcal{E}_0(X) < \infty $, 
	\begin{equation}
		\label{eq:rep-bounded-1}
		\mathcal{E}_t(X) = \esssup_{(\beta,q) \in \mathcal{D}\times \mathcal{Q}}\Set{E_{Q^q}\left[ D_{t,T}^\beta X \Mid \mathcal{F}_t \right]- \alpha_{t,T}(\beta,q)}.
	\end{equation}
\end{theorem}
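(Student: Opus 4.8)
The plan is to establish the two inequalities in \eqref{eq:rep-bounded-1} separately. Write
\[
	R_t:=\esssup_{(\beta,q)\in\mathcal{D}\times\mathcal{Q}}\Set{E_{Q^q}\left[D_{t,T}^\beta X\mid\mathcal{F}_t\right]-\alpha_{t,T}(\beta,q)}
\]
for the candidate right-hand side, the static identity \eqref{eq:rep-bounded-0} being the case $t=0$. The inequality $\mathcal{E}_t(X)\ge R_t$ will come directly from the supersolution property and the Fenchel--Young inequality, and holds for \emph{all}, possibly unbounded, dual variables; the reverse inequality $\mathcal{E}_t(X)\le R_t$ will be obtained by approximating $g$ from below by Lipschitz drivers, so that Proposition \ref{g_expectation} applies.

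For the lower bound I may assume $\mathcal{A}(X)\neq\emptyset$, since otherwise $\mathcal{E}_0(X)=+\infty$ and there is nothing to prove. Let $(\bar Y,\bar Z)$ be the minimal supersolution and $K$ the associated \cadlag, increasing process of \eqref{eq:equiv_supersol}, and fix $(\beta,q)\in\mathcal{D}\times\mathcal{Q}$. Applying It\^o's formula to $u\mapsto D_{t,u}^\beta\bar Y_u$ on $[t,T]$ and passing to $Q^q$ by Girsanov yields the pathwise identity
\[
	\bar Y_t = D_{t,T}^\beta\bar Y_T+\int_t^T D_{t,u}^\beta\big(g_u(\bar Y_u,\bar Z_u)+\beta_u\bar Y_u-q_u\bar Z_u\big)\,du+\int_t^T D_{t,u}^\beta\,dK_u-\int_t^T D_{t,u}^\beta\bar Z_u\,dW_u^{Q^q}.
\]
Now $\bar Y_T\ge X$ and $D^\beta>0$; the Fenchel--Young inequality gives $g_u(\bar Y_u,\bar Z_u)+\beta_u\bar Y_u-q_u\bar Z_u\ge -g^\ast_u(\beta_u,q_u)$; and $K$ is increasing. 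Hence $\bar Y_t\ge D_{t,T}^\beta X-\int_t^T D_{t,u}^\beta g^\ast_u(\beta_u,q_u)\,du-\int_t^T D_{t,u}^\beta\bar Z_u\,dW_u^{Q^q}$. Taking $E_{Q^q}[\,\cdot\mid\mathcal{F}_t]$ and using that admissibility of $\bar Z$ makes the stochastic integral a $Q^q$-supermartingale (so its conditional expectation is nonpositive) I obtain $\bar Y_t\ge E_{Q^q}[D_{t,T}^\beta X-\int_t^T D_{t,u}^\beta g^\ast_u(\beta_u,q_u)\,du\mid\mathcal{F}_t]$. Taking the essential supremum over $(\beta,q)$ gives $\mathcal{E}_t(X)=\bar Y_t\ge R_t$.

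For the upper bound I would regularise the driver by inf-convolution, setting
\[
	g^n_t(\omega,y,z):=\inf_{(y',z')\in\R\times\R^d}\Set{g_t(\omega,y',z')+n\abs{y-y'}+n\norm{z-z'}}.
\]
Standard convex analysis shows each $g^n$ is an $n$-Lipschitz driver that still satisfies \ref{jconv}, \ref{lsc}, \ref{pos}, obeys the linear growth bound $g^n_t(y,z)\le g^n_t(0,0)+n\abs{y}+n\norm{z}$, and that $g^n\uparrow g$ pointwise; dually $(g^n)^\ast\ge g^\ast$, hence the corresponding penalty satisfies $\alpha^n_{t,T}(\beta,q)\ge\alpha_{t,T}(\beta,q)$. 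For each $n$ the BSDE \eqref{eq:BSDE-f-bounded} with driver $g^n$ and bounded terminal value $X$ has a unique solution $Y^n$, and Proposition \ref{g_expectation} gives $Y^n_t=\esssup_{(\beta,q)}\Set{E_{Q^q}[D_{t,T}^\beta X\mid\mathcal{F}_t]-\alpha^n_{t,T}(\beta,q)}\le R_t$, the last step being immediate from $\alpha^n\ge\alpha$. Since $Y^n$ is a supersolution with admissible control (its integral is a $BMO$-, hence true, martingale) and comparison for Lipschitz BSDEs shows $Y^n$ coincides with the minimal supersolution $\mathcal{E}^{g^n}(X)$, and since $g^n\le g^{n+1}\le g$ forces the sequence to increase with $Y^n\le\mathcal{E}(X)$, the monotone stability of minimal supersolutions of \citet{DHK1101} yields $Y^n\uparrow\mathcal{E}(X)$. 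Therefore $\mathcal{E}_t(X)=\lim_n Y^n_t\le R_t$, and combined with the lower bound this proves \eqref{eq:rep-bounded-1}; the case $t=0$, where conditional expectations become expectations and the essential supremum a supremum, is \eqref{eq:rep-bounded-0}.

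I expect two main obstacles. The first, in the lower bound, is the justification that $\int D^\beta\bar Z\,dW^{Q^q}$ is a $Q^q$-supermartingale using only the admissibility of $\bar Z$ under $P$; this is where the control of the stochastic integral after the change of measure is genuinely needed. The second, and conceptually deepest, is the monotone stability step $\lim_n\mathcal{E}^{g^n}(X)=\mathcal{E}(X)$: one must verify that the increasing limit of the Lipschitz-BSDE solutions is again a supersolution for $g$, so that it agrees with the minimal supersolution. This is precisely where the structural results of \citet{DHK1101} enter, and it is the reason the approximation is performed on the driver rather than on the dual variables.
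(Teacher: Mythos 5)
Your overall architecture is the same as the paper's: the lower bound via It\^o's formula, Girsanov and Fenchel--Young applied to a supersolution, and the upper bound via an increasing Lipschitz approximation of $g$ (your inf-convolution $g\,\square\, n(\abs{\cdot}+\norm{\cdot})$ is, up to closure, exactly the paper's truncated biconjugate $g^n=\sup_{\abs{\beta}\le n,\norm{q}\le n}\set{-\beta y+qz-g^\ast(\beta,q)}$, with the same conjugate $g^\ast+\delta_{\text{box}}\ge g^\ast$), followed by Proposition \ref{g_expectation} and the monotone stability of minimal supersolutions. Two remarks on the upper bound: you do not need the identification $Y^n=\mathcal{E}^{g^n}(X)$ via comparison --- the paper deliberately avoids this delicate point (see Remark \ref{rem:sol_minsupersol}) and only uses the one-sided inequality $\bar{Y}^n\le Y^n$ from minimality, which suffices since stability is then applied to $\bar{Y}^n$; and the linear-growth constant $a$ in Proposition \ref{g_expectation} should be checked for $g^n$, an imprecision you share with the paper.

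The genuine gap is in the lower bound. You take conditional expectation of the pathwise inequality and discard the stochastic integral by asserting that ``admissibility of $\bar Z$ makes $\int D^\beta\bar Z\,dW^{Q^q}$ a $Q^q$-supermartingale.'' Admissibility only says that $\int \bar Z\,dW$ is a supermartingale \emph{under $P$}; this property does not transfer to the $Q^q$-local martingale $\int D^\beta\bar Z\,dW^{Q^q}$ under the change of measure and of integrand, and a lower bound on this local martingale that would let you invoke Fatou is not available, since the compensating term $\int D^\beta g^\ast_u(\beta_u,q_u)\,du$ need not be controlled from above. You flag this as an ``obstacle'' but do not resolve it, and as stated the step is false in general. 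The paper closes it by localizing: with $\tau_n:=\inf\set{s>t:\abs{\int_t^s Z_u\,dW_u}>n}\wedge T$ the stopped integral is a true $Q^q$-martingale, one uses the $P$-supermartingale property of $Y$ (which \emph{does} follow from admissibility and \ref{pos}) to get $Y_{\tau_n}\ge E[X\mid\mathcal{F}_{\tau_n}]$, and then passes to the limit in $E_{Q^q}[D^\beta_{t,\tau_n}E[X\mid\mathcal{F}_{\tau_n}]-\int_t^{\tau_n}D^\beta_{t,u}g^\ast_u\,du\mid\mathcal{F}_t]$ by dominated convergence, which is where the boundedness of $X$ is genuinely used. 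Your proof needs this localization argument inserted to be complete.
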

\begin{proof}
	\begin{enumerate}[fullwidth]
		\item[\emph{First inequality:}]
			Let $X$ be a bounded terminal condition. 
			If $\mathcal{A}(X) \neq \emptyset $, then we fix a supersolution $(Y,Z)\in \mathcal{A}(X)$.
			Let $t \in [0,T]$ and $(\beta, q) \in \mathcal{D}\times\mathcal{Q}$.
			Let us define the localizing sequence of stopping times $(\tau_n)$ by
			\begin{equation*}
				\tau_n: = \inf\Set{ s >t: \abs{ \int_t^sZ_u\,dW_u } > n }\wedge T, \quad n\in \mathbb{N}.
			\end{equation*}
			We apply It\^o's formula to $\bar{Y}_u = D_{t,u}^\beta Y_u$ for $u\geq t$.
			Since $(Y, Z)$ satisfies the equivalent formulation \eqref{eq:equiv_supersol}, there exists a nondecreasing process $K$ such that
			\begin{equation*}
				d\bar{Y}_u = - \beta_u D_{t,u}^\beta Y_u\,du + D_{t,u}^\beta \left(Z_u\,dW_u - g_u(Y_u,Z_u)\,du - dK_u \right). 
			\end{equation*}
			Hence, $K$ being nondecreasing, it follows
			\begin{equation*}
				\bar{Y}_{\tau_n} - \bar{Y}_{t} \leq \int_{t}^{\tau_n}D_{t,u}^\beta \left( -\beta_uY_u + q_uZ_u - g(Y_u,Z_u) \right)\,du + \int_{t}^{\tau_n}D_{t,u}^\beta Z_u\,dW^{Q^q}_u.
			\end{equation*}
			Applying Girsanov's theorem, it follows that $\int_t^{\cdot\wedge \tau_n} D_{t,u}^\beta Z_u\,dW^{Q^q}_u$ is a $Q^q$-martingale between $t$ and $T$.
			Taking conditional expectation on both sides, using the definition of $g^\ast$, the facts that $Y_{\tau_n} \geq E\left[ X \mid \mathcal{F}_{\tau_n}\right]$ and $g \geq 0$, we are led to 
			\begin{equation*}
				Y_{t} \geq E_{Q^q}\left[ D_{t,\tau_n}^\beta E\left[ X \mid \mathcal{F}_{\tau_n} \right]  - \int_{t}^{\tau_n}D_{t,u}^\beta g_u^\ast(\beta_u,q_u)\,du \mmid \mathcal{F}_t \right].
			\end{equation*}
			Since $X$ is bounded, taking the limit on the right hand side we obtain by dominated convergence
			\begin{equation*}
				Y_{t} \geq E_{Q^q}\left[ D_{t,T}^\beta X   - \int_{t}^{T}D_{t,u}^\beta g_u^\ast(\beta_u,q_u)\,du \mmid \mathcal{F}_t \right],
			\end{equation*}			
			so that taking the supremum with respect to $\beta$ and $q$ and by the fact that $Y$ was chosen arbitrary, we have
			\begin{equation}
			\label{eq:bounded-first}
				\mathcal{E}_t(X) \geq \esssup_{(\beta,q) \in \mathcal{D}\times \mathcal{Q}}E_{Q^q}\left[ D_{t,T}^\beta X  - \int_t^{T}D_{t,u}^\beta g_u^\ast(\beta_u,q_u)\,du \mmid \mathcal{F}_t\right].
			\end{equation}
			If $\mathcal{A}(X)  = \emptyset$, then Equation \eqref{eq:bounded-first} is obvious. 
		\item[\emph{Second inequality:}]
			Let $n\in \mathbb{N}$, and define
			\begin{equation*}
				g^n(y,z) := \sup_{\set{\abs{ \beta}\leq n; \norm{ q } \leq n}}\Set{ -\beta y + q z - g^*(\beta,q) }.
			\end{equation*}
			For every $n\in \mathbb{N}$, the function $g^n$ satisfies the assumptions of Proposition \ref{g_expectation}.
			Namely, $g^n$ is proper, has linear growth in $y$ and $z$ and satisfies \ref{jconv}, \ref{lsc} and \ref{pos}.
			Moreover, the sequence $(g^n)$ is nondereasing and by the Fenchel-Moreau theorem, it converges pointwise to $g$.
			By Proposition \ref{g_expectation}, the solution $(Y^n,Z^n)$ of the BSDE with generator $g^n$ and terminal condition $X$ has the dual representation
			\begin{equation*}
				Y^n_t = \esssup_{(\beta,q) \in \mathcal{D}\times \mathcal{Q}}E_{Q^q}\left[ D_{t,T}^\beta X - \int_t^T D_{t,u}^\beta g^{n,\ast}_u(\beta_u,q_u)\,du \mmid \mathcal{F}_t \right].
			\end{equation*}
			Let us denote by $(\bar{Y}^n, \bar{Z}^n)$ the minimal supersolution\footnote{As explained in Remark \ref{rem:sol_minsupersol} we cannot ensure at this point that $Y^n = \bar{Y}^n$.} of the BSDE with driver $g^n$ and terminal condition $X$.
			Since for every $n \in \mathbb{N}$ we have $g^n\leq g$, it holds $g^{n,\ast} \geq g^\ast$, and, by minimality of $\bar{Y}^n$ we have $\bar{Y}^n_t \leq Y^n_t$.
			Thus, for all $n \in \mathbb{N}$
			\begin{equation}
			\label{eq:bounded-second-appr}
				\bar{Y}^n_t \leq \esssup_{(\beta,q) \in \mathcal{D}\times \mathcal{Q}}E_{Q^q}\left[ D_{t,T}^\beta X - \int_t^T D_{t,u}^\beta g^{\ast}_u(\beta_u,q_u)\,du \mmid \mathcal{F}_t \right].
			\end{equation}
			If $t = 0$, taking the limit as $n$ goes to infinity and using the monotone stability of minimal supersolutions of BSDEs, see Theorem \ref{thm:stability-withoutDEC}, we obtain
			\begin{equation*}
				\mathcal{E}_0(X) \leq \sup_{(\beta,q) \in \mathcal{D}\times \mathcal{Q}}E_{Q^q}\left[ D_{0,T}^\beta X - \int_0^T D_{0,u}^\beta g^{\ast}_u(\beta_u,q_u)\,du \right].
			\end{equation*}
			Therefore Equation \eqref{eq:rep-bounded-0} holds true.
			If $t \in [0,T]$ and $\mathcal{E}_0(X) < \infty $, then it holds, by monotonicity, $\lim_{n}\bar{Y}^n_0 < \infty$. 
			Hence, taking the limit in Equation \eqref{eq:bounded-second-appr}, by Theorem \ref{thm:stability-withoutDEC} we have
			\begin{equation*}
				\mathcal{E}_t(X) \leq \esssup_{(\beta,q) \in \mathcal{D}\times \mathcal{Q}}E_{Q^q}\left[ D_{t,T}^\beta X  - \int_t^{T}D_{t,u}^\beta g_u^\ast(\beta_u,q_u)\,du \mmid \mathcal{F}_t\right],
			\end{equation*}
			which ends the proof.
	\end{enumerate}
\end{proof}	
In the next corollary, we extend the result of Theorem \ref{thm:rep-bounded1} by giving conditions under which the representation is valid on the whole space $L^\infty$ even in the dynamic case. 
\begin{corollary}
\label{thm:rep-bounded}
	Let $g$ be a driver satisfying \ref{jconv}, \ref{dec}, \ref{lsc} and \ref{pos}.
	Then either $\mathcal{E}_t(X) \equiv + \infty$ for all $X  \in L^\infty$, $t \in [0,T]$, or $\mathcal{E}: L^{\infty} \rightarrow \mathcal{S}$ admits the dual representation
	\begin{equation}
		\label{eq:rep-bounded}
		\mathcal{E}_t(X) = \sup_{(\beta,q) \in \mathcal{D}_+\times \mathcal{Q}}\Set{E_{Q^q}\left[ D_{t,T}^\beta X \Mid \mathcal{F}_t \right]- \alpha_{t,T}(\beta,q)},\quad X \in L^{\infty},\, t \in [0,T],
	\end{equation}
	where the penalty function $\alpha$ is defined in Theorem \ref{thm:rep-bounded1}.
\end{corollary}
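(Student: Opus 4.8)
The plan is to obtain \eqref{eq:rep-bounded} from Theorem \ref{thm:rep-bounded1} in two moves: first establish the dichotomy together with the finiteness of $\mathcal{E}$ on all of $L^\infty$, and then shrink the dual set from $\mathcal{D}\times\mathcal{Q}$ to $\mathcal{D}_+\times\mathcal{Q}$ by exploiting \ref{dec} at the level of the conjugate $g^\ast$.

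First I would settle the dichotomy. The claim is that $\mathcal{A}(X)\neq\emptyset$ either fails for every $X\in L^\infty$ or holds for every $X\in L^\infty$. Indeed, suppose $\mathcal{A}(X_0)\neq\emptyset$ for some $X_0\in L^\infty$, so that $\mathcal{E}_0(X_0)<\infty$. Given an arbitrary $X\in L^\infty$, put $m:=\norm{X}_\infty+\norm{X_0}_\infty\geq0$; then $X\leq\norm{X}_\infty\leq X_0+m$ pointwise, and monotonicity together with cash-subadditivity from Proposition \ref{thm:properties} (the point where \ref{dec} enters) yield $\mathcal{E}_0(X)\leq\mathcal{E}_0(X_0+m)\leq\mathcal{E}_0(X_0)+m<\infty$. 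Hence $\mathcal{A}(X)\neq\emptyset$ and $\mathcal{E}(X)=\bar{Y}\in\mathcal{S}$. If instead $\mathcal{A}(X)=\emptyset$ for every $X\in L^\infty$, then $\mathcal{E}_t(X)\equiv+\infty$, which is the first alternative. In the nontrivial case we have $\mathcal{E}_0(X)<\infty$ for all $X\in L^\infty$, so Theorem \ref{thm:rep-bounded1} applies to every $X\in L^\infty$ and delivers \eqref{eq:rep-bounded-1}, namely the representation with the supremum taken over $\mathcal{D}\times\mathcal{Q}$.

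It then remains to replace $\mathcal{D}$ by $\mathcal{D}_+$. Since $\mathcal{D}_+\subseteq\mathcal{D}$ one inequality is free, and I only need that duals with $\beta\notin\mathcal{D}_+$ do not raise the supremum. This is where \ref{dec} acts through $g^\ast$: if $\beta<0$, then choosing $(y_0,z_0)$ in the domain of $g_u(\omega,\cdot,\cdot)$ and letting $y\to+\infty$, the monotonicity of $y\mapsto g_u(\omega,y,z_0)$ keeps $g_u(\omega,y,z_0)\leq g_u(\omega,y_0,z_0)$ bounded while $-\beta y=\abs{\beta}y\to+\infty$, so $g^\ast_u(\omega,\beta,q)=+\infty$. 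Thus $\set{(\beta,q):g^\ast_u(\omega,\beta,q)<\infty}$ is contained in $\set{\beta\geq0}$. Consequently, if $(\beta,q)\in\mathcal{D}\times\mathcal{Q}$ is such that $\set{(\omega,u):\beta_u(\omega)<0}$ has positive $P\otimes dt$-measure, the integrand in \eqref{eq:rep-bounded-penalty} equals $+\infty$ on a set of positive measure, rendering $\alpha_{t,T}(\beta,q)$ infinite; since $X$ and $D^\beta$ are bounded, $E_{Q^q}[D^\beta_{t,T}X\mid\mathcal{F}_t]$ is finite and the whole dual term equals $-\infty$. Such duals are irrelevant to the essential supremum, so the supremum over $\mathcal{D}\times\mathcal{Q}$ agrees with the one over $\mathcal{D}_+\times\mathcal{Q}$. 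Running the same argument at $t=0$ upgrades \eqref{eq:rep-bounded-0}, and \eqref{eq:rep-bounded} follows.

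The step I expect to be most delicate is justifying that $\alpha_{t,T}(\beta,q)=+\infty$ whenever $\beta\notin\mathcal{D}_+$, because $g^\ast$ need not be globally nonnegative — for instance $g\equiv c>0$ gives $g^\ast(0,0)=-c$ — so a priori the $+\infty$ produced on $\set{\beta_u<0}$ might be offset by negative values of $g^\ast$ elsewhere. The robust way around this is to observe that, $\mathcal{E}_t(X)$ being finite, only duals with finite penalty can be relevant to the essential supremum, and finiteness of $\alpha_{t,T}(\beta,q)$ forces $g^\ast_u(\beta_u,q_u)<\infty$ for $P\otimes dt$-a.e. $(\omega,u)$, hence $\beta_u\geq0$ a.e., i.e. $\beta\in\mathcal{D}_+$. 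When $g_\cdot(\omega,0,0)$ is integrable one can argue even more directly via the uniform lower bound $g^\ast_u(\omega,\beta,q)\geq-g_u(\omega,0,0)$ obtained from the point $(y,z)=(0,0)$ in the transform, which makes the integral in \eqref{eq:rep-bounded-penalty} bounded below and hence unambiguously $+\infty$. Modulo this integrability bookkeeping, the passage to $\mathcal{D}_+$ is precisely the dual counterpart of \ref{dec}.
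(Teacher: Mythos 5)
Your proposal is correct and follows essentially the same route as the paper: the dichotomy is obtained from monotonicity and the cash-subadditivity supplied by \ref{dec} (the paper phrases it directly in terms of the sets $\mathcal{A}(\cdot)$ via $-\norm{\xi}_\infty \le \xi$ and upward shifts, which is the same computation), and the restriction from $\mathcal{D}$ to $\mathcal{D}_+$ rests on the observation that \ref{dec} forces the domain of $g^\ast$ into $\mathbb{R}_+\times\mathbb{R}^d$. Your closing discussion of why $\alpha_{t,T}(\beta,q)=+\infty$ for $\beta\notin\mathcal{D}_+$ is actually more careful than the paper, which states the domain restriction without further justification; the only point worth tightening is that $\alpha_{t,T}(\beta,q)$ is a conditional expectation and so is $+\infty$ only on a positive-probability event, on whose complement one should compare with the dual element $\beta^+\in\mathcal{D}_+$ to conclude the essential suprema coincide.
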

\begin{proof}
	If for every $X \in L^\infty$ the set $\mathcal{A}(X)$ is empty, then the domain of $\mathcal{E}$ is empty. 
	On the other hand, if there exists $\xi \in L^\infty$ such that $\mathcal{A}(\xi) \neq \emptyset$, then $\mathcal{A}(X) \neq \emptyset $ for all $X \in L^\infty$. 
	In fact, using $-\norm{\xi}_\infty \le \xi $ we have $\mathcal{A}(-\norm{\xi}_\infty) \neq \emptyset$ and by \ref{dec}, see the arguments of the proof of Proposition \ref{thm:properties}, we have $\mathcal{A}(-\norm{\xi}_\infty + c) \neq \emptyset $ for all $c\ge 0$. 
	Hence $\mathcal{A}(X) \neq \emptyset$ for all $X \in L^\infty $, since $X \le \norm{X}_\infty$ and $\mathcal{A}(\norm{X}_\infty) \neq \emptyset $ for all $X \in L^\infty $. 

	The rest of the proof is similar to that of Theorem \ref{thm:rep-bounded1}.
	Because $g$ satisfies \ref{dec}, the domain of $g^\ast$ is concentrated on $ \mathbb{R}_+ \times \mathbb{R}^d $, so that the representation can be restricted to $  \mathcal{D}_+ \times \mathcal{Q} $.
\end{proof}
\begin{remark}
\label{rem:sol_minsupersol}
	For a given BSDE, it is not a priori clear that the minimal supersolution solution and the solution agree, since the measure induced by the process $K$ appearing in the definition of the minimal supersolution can be singular to the Lebesgue measure.
	Proposition \ref{g_expectation} and Corollary \ref{thm:rep-bounded} show that if the terminal condition is bounded and the generator is of linear growth both in $y$ and $z$, then the minimal supersolution of a BSDE coincides with its solution. 
	In particular, $\mathcal{E}(X)$ is a continuous process, compare \citep[Proposition 4.4]{DHK1101}.
\end{remark}
\subsection{The Extension to $\mathcal{H}^1$}
The goal of this section is to extend the dual representation of $\mathcal{E}_0$ to the space $\mathcal{H}^1$.
We define
\begin{equation*}
	\mathcal{SQ} = \Set{ M \in L^\infty_{++}: E[ M ] \leq 1 }.
\end{equation*}
We denote by $\mathcal{E}^\ast_0 $ the convex conjugate of $\mathcal{E}_0$, defined as
\begin{equation*}
	\mathcal{E}^\ast_{0}(M) := \sup_{X \in \mathcal{H}^1}\Set{ E\left[ M X  \right] - \mathcal{E}_0(X) }, \quad M \in L^\infty.
		\label{eq:rest_on_SQ_penalty}
\end{equation*}
The following lemma is a consequence of the Fenchel-Moreau theorem and the structural properties of $\mathcal{E}_0$.
\begin{lemma}\label{lem:rep-sq}
	Let $g$ be a driver satisfying \ref{jconv}, \ref{dec}, \ref{lsc}, \ref{pos} and such that $\mathcal{E}_0$ is proper.\footnote{$\mathcal{E}_0$ is proper for instance if there exists $y_0 \in \mathbb{R}$ with $g(y_0,0) = 0$. In fact, in that case, the pair $(y_0,0)$ is in $\mathcal{A}(y_0)$ and therefore $\mathcal{E}_0(y_0) \le y_0 < \infty$. And by \ref{pos},  $\mathcal{E}_0(X) \ge E[X] > - \infty$ for all $X \in \mathcal{H}^1 $. } 
	Then, the operator $\mathcal{E}_0: \mathcal{H}^1 \rightarrow ]-\infty ,\infty]$ is $\sigma(\mathcal{H}^1, L^\infty)$-lower semicontinuous, and admits the dual representation
	\begin{equation}
		\label{eq:rest_on_SQ}
		\mathcal{E}_0(X) = \sup_{M \in \mathcal{SQ}}\Set{E[MX] - \mathcal{E}_0^\ast(M)}, \quad X \in \mathcal{H}^1.
	\end{equation}
\end{lemma}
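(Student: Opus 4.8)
The plan is to view $\mathcal{E}_0$ as a functional on the dual pair $(\mathcal{H}^1, L^\infty)$ with bilinear form $\langle X, M\rangle = E[MX]$, which is well defined because $\mathcal{H}^1 \subseteq L^1$ and finite because $M \in L^\infty$. Since moreover $L^\infty \subseteq \mathcal{H}^1$, the two spaces separate each other's points, so $(\mathcal{H}^1, L^\infty)$ is a genuine dual pair. Once we know that $\mathcal{E}_0$ is proper (by assumption), convex (Proposition \ref{thm:properties}), and $\sigma(\mathcal{H}^1, L^\infty)$-lower semicontinuous, the Fenchel--Moreau theorem yields $\mathcal{E}_0 = \mathcal{E}_0^{\ast\ast}$, that is $\mathcal{E}_0(X) = \sup_{M\in L^\infty}\Set{E[MX] - \mathcal{E}_0^\ast(M)}$, and it then only remains to show that the supremum may be restricted to $\mathcal{SQ}$.

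The heart of the matter, and the main obstacle, is the $\sigma(\mathcal{H}^1, L^\infty)$-lower semicontinuity: since $L^\infty$ is strictly smaller than the norm dual $BMO$ of $\mathcal{H}^1$, this cannot be read off directly from Mazur's theorem inside $\mathcal{H}^1$. The device I would use is that $\sigma(\mathcal{H}^1, L^\infty)$ is precisely the topology induced on $\mathcal{H}^1$ by $\sigma(L^1, L^\infty)$, since both are generated by the seminorms $X\mapsto \abs{E[MX]}$, $M \in L^\infty$. On $L^1$ the space $L^\infty$ is the genuine topological dual, so I argue there: restricted to $L^1$, the functional $\mathcal{E}_0$ is convex and $L^1$-lower semicontinuous (Proposition \ref{thm:properties}), hence its level sets $\Set{X \in L^1 : \mathcal{E}_0(X) \le c}$ are convex and norm closed, and therefore $\sigma(L^1, L^\infty)$-closed by Mazur. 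Intersecting with $\mathcal{H}^1$ shows that $\Set{X \in \mathcal{H}^1 : \mathcal{E}_0(X) \le c}$ is closed for the subspace topology $\sigma(\mathcal{H}^1, L^\infty)$, which is the required lower semicontinuity.

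With Fenchel--Moreau available, I would cut down the domain of $\mathcal{E}_0^\ast$ by means of the structural properties. Fix any $X_0$ with $\mathcal{E}_0(X_0) < \infty$, which exists by properness. If $P(M < 0) > 0$, testing the conjugate against $X_0 - \lambda 1_{\Set{M<0}} \le X_0$ and using monotonicity gives $\mathcal{E}_0^\ast(M) \ge E[MX_0] + \lambda E[M^-] - \mathcal{E}_0(X_0) \to \infty$ as $\lambda \to \infty$, so $\mathcal{E}_0^\ast(M) = \infty$ unless $M \ge 0$. Similarly, testing against $X_0 + m$ for $m\ge 0$ and invoking cash-subadditivity (this is where \ref{dec} enters) produces a term $m(E[M] - 1)$, forcing $E[M] \le 1$ on the effective domain. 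Hence $\Set{\mathcal{E}_0^\ast < \infty} \subseteq \Set{M \in L^\infty_+ : E[M] \le 1}$.

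Finally, to pass from $L^\infty_+$ to the strictly positive densities constituting $\mathcal{SQ}$, I would use the approximation $M_\varepsilon := (1-\varepsilon)M + \varepsilon \in L^\infty_{++}$, which satisfies $E[M_\varepsilon] \le 1$ and hence lies in $\mathcal{SQ}$. Convexity of $\mathcal{E}_0^\ast$ together with $\mathcal{E}_0^\ast(1) \le 0$ (a consequence of $\mathcal{E}_0 \ge E[\,\cdot\,]$, itself guaranteed by \ref{pos}) gives $E[M_\varepsilon X] - \mathcal{E}_0^\ast(M_\varepsilon) \ge (1-\varepsilon)(E[MX] - \mathcal{E}_0^\ast(M)) + \varepsilon E[X]$; letting $\varepsilon \downarrow 0$ shows that the supremum over $\mathcal{SQ}$ already recovers $\mathcal{E}_0(X)$. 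As the reverse inequality $\sup_{\mathcal{SQ}} \le \sup_{L^\infty} = \mathcal{E}_0(X)$ is trivial, the representation \eqref{eq:rest_on_SQ} follows.
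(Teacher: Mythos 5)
Your proof is correct and follows essentially the same route as the paper: transfer of $\sigma(L^1,L^\infty)$-lower semicontinuity to the subspace $\mathcal{H}^1$, Fenchel--Moreau, and then restriction of the dual domain to $\mathcal{SQ}$ via monotonicity (nonnegativity), cash-subadditivity ($E[M]\le 1$), and the perturbation $(1-\varepsilon)M+\varepsilon$ with $\mathcal{E}_0^\ast(1)\le 0$. The only cosmetic difference is that you test nonnegativity with the explicit direction $1_{\Set{M<0}}\in\mathcal{H}^1_+$ where the paper invokes the polar-cone duality between $L^\infty_+$ and $\mathcal{H}^1_+$ to produce such a direction; both are valid.
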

\begin{proof}
	$\mathcal{E}_0$ is proper, convex since $g$ fulfills \ref{jconv}, and $\sigma (L^1, L^\infty)$-lower semicontinuous by \citep[Theorem 4.9]{DHK1101} and therefore, since $\mathcal{H}^1 \subseteq L^1$, it is $\sigma(\mathcal{H}^1,L^\infty)$-lower semicontinuous.
	By the Fenchel-Moreau theorem, it follows 
		\begin{equation}
			\mathcal{E}_0(X) = \sup_{M \in L^\infty} \Set{E\left[  M X \right]-\mathcal{E}^\ast_{0}(M)}, \quad X \in \mathcal{H}^1.
		\label{E:Fenchel}
	\end{equation}
	A standard argument shows that we can restrict the previous supremum from $L^\infty$ to $\mathcal{SQ}$.
	On the one hand, let $M \in L^\infty$ with $E[M] > 1 $ and $\xi_0 \in \mathcal{H}^1 $ such that $\mathcal{E}_0(\xi_0) < \infty $.
	By cash-subadditivity, see Proposition \ref{thm:properties}, it holds
	\begin{align*}
		\mathcal{E}_0^\ast(M)  & \geq \sup_{n \in \mathbb{N}}\Set{E[M(\xi_0 + n)] - \mathcal{E}_0(\xi_0 + n)}\\
							   & \geq \sup_{n \in \mathbb{N}}\Set{n\left( E[M] - 1 \right) } + E[M\xi_0] - \mathcal{E}_0(\xi_0) = +\infty.
	\end{align*}
	On the other hand, let $M \in L^\infty \setminus L^\infty_+$ and $\xi_0 \in \mathcal{H}^1 $ such that $\mathcal{E}_0(\xi_0) < \infty $.
	There is $\bar{X} \in \mathcal{H}^1_+$ such that $E\left[ M \bar{X} \right] < 0$ since $L^\infty_+$ is the polar cone of $\mathcal{H}^1_+$.
	By monotonicity of $\mathcal{E}_0$, we have $\mathcal{E}_0(-n\bar{X} + \xi_0) \leq \mathcal{E}_0(\xi_0)$.
	Hence, 
	\begin{align*}
		\mathcal{E}^\ast_{0}(M) &\geq \sup_{n\in \mathbb{N}}\Set{nE\left[- M \bar{X} \right] + E[M\xi_0] - \mathcal{E}_0(-n\bar{X} + \xi_0)}\\
		                        & \geq \sup_{n \in \mathbb{N}}\Set{ nE\left[- M \bar{X}  \right] } + E[M\xi_0] - \mathcal{E}_0(\xi_0) = +\infty.		
	\end{align*}
	Therefore, we have
	\begin{equation*}
		\mathcal{E}_0(X) = \sup_{M \in L^\infty_+:\,\, E[M]\leq 1}\Set{E\left[ MX \right] - \mathcal{E}^\ast_0(M)}.
	\end{equation*}
	Now, let $M \in L^\infty_+ $ such that $E[M] \le 1$, and for all $\lambda \in (0,1)$, we put $M^\lambda = (1 - \lambda)M + \lambda$.
	Then, $M^\lambda \in \mathcal{SQ}$.
	Since for any $X \in \mathcal{H}^1$ we have $\mathcal{E}_0(X) \ge E[X] $, it follows from the definition of $\mathcal{E}_0^\ast $ that $\mathcal{E}_0^\ast(1) \leq 0 $ so that by convexity, it holds $\limsup_{\lambda \rightarrow 0}\mathcal{E}^\ast_0(M^\lambda) \leq \mathcal{E}^\ast_0(M)$.
	Let $X \in \mathcal{H}^1$, applying dominated convergence theorem to $(M^\lambda X)$ implies
	\begin{equation*}
		E[MX] - \mathcal{E}^\ast_0(M) \leq \liminf_{\lambda \rightarrow 0} \Set{ E\left[M^\lambda X \right] - \mathcal{E}^\ast_0(M^\lambda)}.
	\end{equation*} 
	Hence, $\mathcal{E}_0(X) \leq \sup_{M \in \mathcal{SQ}} \set{E[MX] - \mathcal{E}^\ast_0(M)} $. 
	The other inequality follows by sets inclusion.
	Thus, Equation \eqref{eq:rest_on_SQ} holds true.
\end{proof}
We observe that there is a relationship between the sets $\mathcal{SQ}$ and $\mathcal{D}_+\times \mathcal{Q} $, and the dual representation of $\mathcal{E}_0$.
\begin{remark}
	Any element of $\mathcal{SQ} $ may be parametrized by elements of $\mathcal{D}_+\times \mathcal{Q} $ and vice versa.
	Indeed, for every $M \in \mathcal{SQ}$, since $M/E[M]$ is a strictly positive random variable with expectation 1, there exists a unique process $q \in \mathcal{Q}$ such that $M^q_T = M/E[M]$, with $M^q_t = \exp(\int_0^t q_u\,dW_u - \frac{1}{2}\int_0^t \norm{q}^2_u\,du) $ and taking $\beta \in \mathcal{D}_+$ such that $\exp(-\int_0^T \beta_s ds) = E[M] \in (0,1]$, we have $ M = \exp(-\int_0^T \beta_s ds)M^q_T$.
	Conversely, given $(\beta,q)\in \mathcal{D}_+ \times \mathcal{Q}$, it holds $\exp( -\int_{0}^{T}\beta_s ds )\exp( \int_0^Tq_u\,dW_u - \frac{1}{2}\int_0^T\norm{q}^2_u\,du ) \in \mathcal{SQ}$.
	This underlines the importance of working with probability measures with bounded densities in the previous section.
\end{remark}

\begin{remark}
\label{rem:intro-alphamin}
	To every $M \in \mathcal{SQ}$ corresponds a unique $q \in \mathcal{Q}$. Hence, for all $X \in L^\infty$, Corollary \ref{thm:rep-bounded} yields
	\begin{align*}
		\mathcal{E}_0(X) & = \sup_{(\beta, q) \in \mathcal{D}_+ \times \mathcal{Q}}\Set{ E\left[\frac{dQ^q}{dP}D^{\beta}_{0,T}X \right] - E_{Q^q}\left[ \int_0^TD^{\beta}_{0,u}g_u^\ast(\beta_u, q_u)\,du \right] }\\
                 & = \sup_{M \in \mathcal{SQ}}\sup_{\set{\beta\in \mathcal{D}_+:\,\, D^{\beta}_{0,T} = E[M]}}\Set{ E\left[MX \right] - E_{Q^q}\left[ \int_0^TD^{\beta}_{0,u}g_u^\ast(\beta_u, q_u)\,du \right] }\\
                 & = \sup_{M \in \mathcal{SQ}}\Set{ E\left[MX \right] - \alpha_{min}(M) },
	\end{align*}
	for the penalty function 
	\begin{equation}
	\label{eq:min-penalty}
		\alpha_{min}(M) := \inf_{\set{\beta\in \mathcal{D}_+:\,\, D^{\beta}_{0,T} = E[M]} }E_{Q^q}\left[ \int_0^TD^{\beta}_{0,u}g_u^\ast(\beta_u, q_u)\,du \right]
	\end{equation}
	defined on $\mathcal{SQ}$.
\end{remark}
We may now present the main result of this section, the extension to $\mathcal{H}^1$ of the dual representation Theorem \ref{thm:rep-bounded}.
\begin{theorem}\label{thm:rep-nonbounded}
	Let $g$ be a driver satisfying \ref{jconv}, \ref{dec}, \ref{lsc} and \ref{pos} and such that $\mathcal{E}_0$ is proper.
	Then the operator $\mathcal{E}_0: \mathcal{H}^1 \rightarrow ] -\infty, + \infty]$ admits the dual representation
	\begin{equation}\label{eq:rep-nonbounded}
		\mathcal{E}_0(X) = \sup_{(\beta,q) \in \mathcal{D}_+ \times \mathcal{Q}}\Set{E_{Q^q}\left[ D_{0,T}^\beta X \right]- \alpha_0(\beta,q)},\quad X \in \mathcal{H}^{1},
	\end{equation}
	where
	\begin{equation}
		\alpha_0(\beta,q):=E_{Q^q}\left[ \int_{0}^{T} D_{0,u}^\beta g^\ast_u(\beta_u,q_u)\,du \right],\quad (\beta,q) \in \mathcal{D}_+ \times \mathcal{Q}.
		\label{eq:rep-nonbounded-penalty}
	\end{equation}
\end{theorem}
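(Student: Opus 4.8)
The plan is to combine the abstract Fenchel--Moreau representation of Lemma \ref{lem:rep-sq} with the concrete $L^\infty$-representation of Corollary \ref{thm:rep-bounded} and Remark \ref{rem:intro-alphamin}, the only real task being to identify the two penalty functions. By Lemma \ref{lem:rep-sq}, every $X \in \mathcal{H}^1$ satisfies $\mathcal{E}_0(X) = \sup_{M \in \mathcal{SQ}}\set{E[MX] - \mathcal{E}_0^\ast(M)}$. On the other hand, grouping the dual parameters $(\beta,q)$ by the single random variable $M = D_{0,T}^\beta\,dQ^q/dP$ as in Remark \ref{rem:intro-alphamin} (a purely algebraic step, valid here since $M \in L^\infty$ and $X \in L^1$, so that $E_{Q^q}[D_{0,T}^\beta X] = E[MX]$), shows that the right-hand side of \eqref{eq:rep-nonbounded} equals $\sup_{M \in \mathcal{SQ}}\set{E[MX] - \alpha_{min}(M)}$, where $\alpha_{min}(M) = \inf_{\beta}\alpha_0(\beta,q)$ over the admissible $\beta$. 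Hence it suffices to prove $\mathcal{E}_0^\ast(M) = \alpha_{min}(M)$ for every $M \in \mathcal{SQ}$.

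For the inequality $\mathcal{E}_0^\ast \le \alpha_{min}$ I would extend the \emph{first inequality} of Theorem \ref{thm:rep-bounded1} from bounded to $\mathcal{H}^1$ terminal conditions. Fixing a supersolution $(Y,Z) \in \mathcal{A}(X)$ and $(\beta,q) \in \mathcal{D}_+\times\mathcal{Q}$, I apply It\^o's formula to $D_{t,u}^\beta Y_u$, localise along $\tau_n$, change measure by Girsanov, and use $g^\ast \ge 0$ together with $Y_{\tau_n} \ge E[X\mid\mathcal{F}_{\tau_n}]$ exactly as there. The novelty lies in the passage to the limit: since $\beta \in \mathcal{D}_+$ gives $0 \le D_{0,\cdot}^\beta \le 1$ and $dQ^q/dP \in L^\infty$, the relevant integrands are dominated by $\norm{dQ^q/dP}_\infty\,\sup_{t}\abs{E[X\mid\mathcal{F}_t]}$, which is integrable \emph{precisely because} $X \in \mathcal{H}^1$. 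Dominated convergence then yields $\mathcal{E}_0(X) \ge E_{Q^q}[D_{0,T}^\beta X] - \alpha_0(\beta,q)$; taking the infimum over $\beta$ gives $\mathcal{E}_0(X) \ge E[MX] - \alpha_{min}(M)$, and the supremum over $X \in \mathcal{H}^1$ delivers $\mathcal{E}_0^\ast(M) \le \alpha_{min}(M)$. This is exactly where the choice of densities in $L^\infty$ (the set $\mathcal{Q}$) and the $\mathcal{H}^1$--$L^\infty$ pairing are essential.

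For the reverse inequality $\mathcal{E}_0^\ast \ge \alpha_{min}$ I would argue by approximation from $L^\infty$, where Corollary \ref{thm:rep-bounded} already gives $\mathcal{E}_0 = \sup_{M \in \mathcal{SQ}}\set{E[M\cdot] - \alpha_{min}(M)}$. For $X \in \mathcal{H}^1$ bounded below, the truncations $X_n := X \wedge n$ lie in $L^\infty$ and increase to $X$; the monotone convergence property of Proposition \ref{thm:properties} gives $\mathcal{E}_0(X_n) \uparrow \mathcal{E}_0(X)$, while $E[MX_n]\uparrow E[MX]$ for $M \ge 0$. Interchanging the two suprema then yields $\mathcal{E}_0(X) = \sup_n\sup_M\set{E[MX_n]-\alpha_{min}(M)} = \sup_M\set{E[MX]-\alpha_{min}(M)}$, which is the desired representation. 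Combined with the previous step this forces $\mathcal{E}_0^\ast = \alpha_{min}$ on the effective domain and establishes \eqref{eq:rep-nonbounded}.

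The main obstacle is this last step for terminal conditions that are \emph{not} bounded below. Both $\mathcal{E}_0$ and the dual functional $X \mapsto \sup_M\set{E[MX]-\alpha_{min}(M)}$ are only lower semicontinuous, so the natural two-sided truncation $(X\wedge n)\vee(-n)$ converges to $X$ but yields estimates in the wrong direction, and one cannot pass to the limit naively. I would reduce to the bounded-below case via the increasing approximation $X = \lim_m\, X\vee(-m)$, using the stability (monotone convergence and Fatou) of $\mathcal{E}_0$ from Proposition \ref{thm:properties} together with a minimax argument over the convex set $\mathcal{SQ}$ to exchange the limit in $m$ with the supremum over the dual variables. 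Controlling the lower tail $E_{Q^q}[D_{0,T}^\beta(X - X\vee(-m))]$ uniformly in $(\beta,q)$ — again exploiting $D_{0,T}^\beta \le 1$, the bounded densities and the integrable maximal function $\sup_t\abs{E[X\mid\mathcal{F}_t]}$ — is what makes this exchange legitimate and closes the proof.
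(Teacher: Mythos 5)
Your treatment of the first inequality ($\mathcal{E}_0^\ast \le \alpha_{min}$) is sound and essentially the paper's: localise, change measure, and pass to the limit using $D^\beta_{0,\cdot}\le 1$, the bounded density and the integrable maximal function $\sup_t\abs{E[X\mid\mathcal{F}_t]}$. (The paper instead invokes weak compactness of the sequence $(E[X\mid\mathcal{F}_{\tau_n\wedge\cdot}])$ in $\mathcal{H}^1$ and $\sigma(\mathcal{H}^1,BMO)$-continuity of $X\mapsto E[M^q_TX]$; your dominated-convergence argument is a legitimate, more elementary alternative, since $\tau_n=T$ eventually almost surely.)

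The gap is in the second inequality. Your truncation $X\wedge n\uparrow X$ does extend the representation to $X\in\mathcal{H}^1$ bounded from below, but the reduction of the general case to the bounded-below case does not go through as described. First, $X\vee(-m)$ \emph{decreases} to $X$, so the monotone convergence property of Proposition \ref{thm:properties} does not apply, and Fatou only gives $\mathcal{E}_0(X)\le\liminf_m\mathcal{E}_0(X\vee(-m))$; you are left needing $\lim_m\sup_M\set{E[M(X\vee(-m))]-\alpha_{min}(M)}\le\sup_M\set{E[MX]-\alpha_{min}(M)}$, i.e. the interchange of a decreasing limit with a supremum, for which the trivial inequality points the wrong way. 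Second, the two ingredients you invoke to justify that interchange are not available: the tail estimate $E_{Q^q}\bigl[D^\beta_{0,T}(X\vee(-m)-X)\bigr]\le\norm{dQ^q/dP}_\infty E[(X+m)^-]$ is \emph{not} uniform over $\mathcal{Q}$, because the densities are bounded individually but not uniformly (membership in $\mathcal{Q}$ only requires $dQ^q/dP\in L^\infty$); and a minimax argument over $\mathcal{SQ}$ would require both compactness of $\mathcal{SQ}$ in a topology for which $M\mapsto E[MX]-\alpha_{min}(M)$ is upper semicontinuous and, in particular, lower semicontinuity of $\alpha_{min}$ --- which is precisely the hard content your route is trying to avoid. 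The paper closes this gap differently: it shows that $\alpha_{min}$ coincides with the minimal penalty $\mathcal{E}^\ast_{L^\infty}(M)=\sup_{X\in L^\infty}\set{E[MX]-\mathcal{E}_0(X)}$ by proving that the sublevel sets $\set{\alpha_{min}\le c}$ are convex and $L^1$-closed (It\^o's formula for convex combinations of the density processes for convexity; fast subsequences, stopping, Burkholder--Davis--Gundy and the Delbaen--Schachermayer compactness lemma for closedness), whence $\mathcal{E}_0^\ast\ge\mathcal{E}^\ast_{L^\infty}=\alpha_{min}$ by set inclusion. Some argument of this type, establishing the lower semicontinuity of $\alpha_{min}$, appears unavoidable and is missing from your proposal.
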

\begin{proof}
	Due to Lemma \ref{lem:rep-sq} and Remark \ref{rem:intro-alphamin}, it suffices to show that $\mathcal{E}^\ast_0 = \alpha_{min}$ on $\mathcal{SQ}$, where $\alpha_{min}$ is the penalty function defined by Equation \eqref{eq:min-penalty}.
	\begin{enumerate}[label= ,fullwidth]
		\item\emph{ First inequality.} \label{step:first_inequalityH1}
		For all $X \in \mathcal{H}^1$, it holds
		\begin{equation}\label{eq:rep_h1_first-inequality}
			\mathcal{E}_0(X) \geq \sup_{(\beta,q) \in \mathcal{D}_+ \times \mathcal{Q}}E_{Q^q}\left[ D^{\beta}_{0,T}X - \int_0^TD^{\beta}_{0,u}g^{\ast}_u(\beta_u,q_u)\,du \right].
		\end{equation}
		In fact, let $X \in \mathcal{H}^1$. If $\mathcal{A}(X) = \emptyset$, then the result is trivial. 
		Suppose that $\mathcal{A}(X) \neq \emptyset$, and take $(Y,Z) \in \mathcal{A}(X)$. Let $(\beta,q) \in \mathcal{D}_+ \times\mathcal{Q}$, 
		arguing exactly like in the first part of the proof of Theorem \ref{thm:rep-bounded1} we obtain a localizing sequence of stopping times $(\tau_n)$ such that
		\begin{equation}\label{eq:conditional_expectation}
			Y_0 \geq E_{Q^q}\left[ D^{\beta}_{0,\tau_n} E[X \mid \mathcal{F}_{\tau_n}] - \int_0^{\tau_n }D^{\beta}_{0,u}g_u^\ast(\beta_u,q_u)\,du \right] \quad \text{for all } n \in \mathbb{N}.
		\end{equation}
		Since $X \in \mathcal{H}^1 $, the sequence of martingales $(N^n)$ given by $N^n_t := E[E[X \mid \mathcal{F}_{\tau_n}] \mid \mathcal{F}_{t}] = E[X \mid \mathcal{F}_{\tau_n\wedge t} ]$ is in $\mathcal{H}^1$, and is such that $\bigl( \sup_{t \in [0,T]} \abs{ N^n_t } \bigr)_n$ is uniformly integrable. 
		Therefore, by \citep[Theorem 4.9]{delbaen03}, see also \citep[Lemma 2.5]{Kazamaki01}, $(N^n)$ admits a subsequence again denoted by $(N^n)$ which converges weakly in $\mathcal{H}^1$. 
		Thus, the sequence of products $(D^\beta_{0,\tau^n}N^n_T) $ converges weakly in $\mathcal{H}^1$ to $D^\beta_{0,T}X$, since $(D^\beta_{0,\tau^n})$ is bounded by $1$.
		Now, as a consequence of the boundedness of the martingale $M^q_t = E[dQ^q/dP\mid \mathcal{F}_t ] $, the function $X \mapsto E[M^q_TX ]$ from $\mathcal{H}^1$ to $\mathbb{R}$ is linear and continuous, and therefore $\sigma(\mathcal{H}^1, BMO) $-continuous.
		Hence, taking the limit on both sides of Equation \eqref{eq:conditional_expectation} leads to
		\begin{equation*}
			Y_0 \geq E_{Q^{q}}\left[ D^{\beta}_{0,T} X  - \int_0^{T} D^{\beta}_{0,u}g_u^\ast(\beta_u,q_u)\,du \right].
		\end{equation*}
		This implies, by means of Remark \ref{rem:intro-alphamin}, that
		\begin{equation*}
			\mathcal{E}_0(X) \ge \sup_{M \in \mathcal{SQ} }\Set{ E\left[MX \right] - \alpha_{min}(M) },
		\end{equation*}
		that is, for every $M \in \mathcal{SQ}$ we have $\alpha_{min}(M) \ge E\left[MX \right] - \mathcal{E}_0(X) $ so that taking the supremum with respect to $X \in \mathcal{H}^1$, we obtain by definition of $\mathcal{E}^\ast$
		\begin{equation*}
			\alpha_{min}(M) \geq \mathcal{E}^\ast_{0}(M).
		\end{equation*}
		\item \emph{Second inequality.} 
		The main argument for the second inequality is to show that the penalty function $\alpha_{min}$ defined by Equation \eqref{eq:min-penalty}  is minimal, that is, 
		\begin{equation*}
				\mathcal{E}^\ast_{L^\infty}(M) := \sup_{X \in L^\infty}\Set{E\left[ M X \right]-\mathcal{E}_0(X)} = \alpha_{min}(M), \quad M \in \mathcal{SQ}.
		\end{equation*}
		In fact, that would imply
		\begin{equation*}
			\mathcal{E}^\ast_{0}(M) \ge \mathcal{E}^\ast_{L^{\infty}}(M) = \alpha_{min}(M), 
		\end{equation*}
		where the first inequality is obtained by sets inclusion.
		To that end, it suffices to show that for every $c \geq 0$ the set $\{M \in \mathcal{SQ}: \alpha_{min}(M) \leq c \}$ is convex and closed in $L^1$, since by convexity, it would then be $\sigma(L^1, L^\infty)$-closed and therefore $\sigma(L^\infty, L^\infty)$-closed. 
		\begin{enumerate}[label= ,fullwidth]
			\item \emph{Convexity:}  
			Let $\lambda \in [0,1]$, $M^1, M^2  \in \mathcal{SQ}$ and $q^i \in \mathcal{Q}$ such that $M^{q^i}_T = M^i/E[M^i]$, $i = 1,2$. Put $M^{\lambda} = \lambda M^1 + (1 -\lambda)M^2$. For a given $\varepsilon > 0$, there exists $\beta^i \in \mathcal{D}_+	$ such that $D^{\beta^i}_{0,T} = E[M^i]$ and
			\begin{equation*}
				\varepsilon + \alpha_{min}(M^i) \geq E_{Q^{q^i} } \left[ \int_0^TD^{\beta^i}_{0,u}g^\ast(\beta^i_u,q^i_u)\,du \right].
			\end{equation*}
			Applying It\^o's formula to $\log\bigl(\lambda M^{q^1}_tD^{\beta^1}_t + (1 - \lambda)M^{q^2}_tD^{\beta^2}_t \bigr)$ such as in the proof of \citep[Lemma 2.1]{BDH10} we have
			\begin{equation*}
				\lambda M^{q^1}_tD^{\beta^1}_{0,t} + (1 - \lambda)M^{q^2}_tD^{\beta^2}_{0,t} = \exp\left(\int_0^tq^{\lambda}_u\,dW_u - \frac{1}{2}\int_0^t \norm{ q_u^{\lambda} }^2\,du - \int_0^t\beta_u^{\lambda}\,du \right)	= M^{q^{\lambda}}_tD^{\beta^{\lambda}}_{0,t}	
			\end{equation*}
			 and $D^{\beta^{\lambda}}_{0,T} = E[M^{\lambda}]$, with
			\begin{equation*}
				q^{\lambda}_t = \frac{\lambda M^{q^1}_tD^{\beta^1}_{0,t}q^1_t + (1-\lambda)M^{q^2}_tD^{\beta^2}_{0,t}q^2_t}{\lambda M^{q^1}_tD^{\beta^1}_{0,t} + (1 - \lambda)M^{q^2}_tD^{\beta^2}_{0,t}},\quad
				\beta^{\lambda}_t = \frac{\lambda M^{q^1}_tD^{\beta^1}_{0,t}\beta^1_t + (1-\lambda)M^{q^2}_tD^{\beta^2}_{0,t}\beta^2_t}{\lambda M^{q^1}_tD^{\beta^1}_{0,t} + (1 - \lambda)M^{q^2}_tD^{\beta^2}_{0,t}}.
			\end{equation*}
 			This follows from the facts that $M^{q^{\lambda}}_TE[M^{\lambda}] = M^{\lambda} = M^{q^{\lambda}}_TD^{\beta^{\lambda}}_{0,T}$ and $M^{q^\lambda} > 0$.     
			Therefore, joint convexity of $g^\ast$ and the definition of $(\beta^\lambda,q^\lambda)$ lead us to
			\begin{align*}
				2\varepsilon  + \lambda\alpha_{min}(M^1)  + (1-\lambda) \alpha_{min}(M^2)  
				 &\geq E\left[ \int_0^T\left(\lambda M^{q^1}_u D^{\beta^1}_{0,u} + (1-\lambda)M^{q^2}_uD^{\beta^2}_{0,u}  \right)g_u^\ast(\beta^{\lambda}_u, q^{\lambda}_u)\,du \right]\\
				  &= E_{Q^{q^{\lambda}}}\left[ \int_0^TD^{\beta^{\lambda}}_{0,u}g_u^\ast(\beta^{\lambda}_u,q^{\lambda}_u)\,\,du \right].
			\end{align*}
			Therefore, taking first the infimum for $\beta \in \mathcal{D}_+$ such that $D^{\beta}_{0,T} = E[M^{\lambda}]$ on the right hand side, and then the limit on the left hand side as $\varepsilon$ goes to $0$  we have
			\begin{equation*}
				\lambda\alpha_{min}(M^1) + (1-\lambda) \alpha_{min}(M^2) \geq \alpha_{min}(M^{\lambda}).
			\end{equation*}
			\item \emph{Closedness:}  			
			Let $c \ge 0$ and $(M^n)$ be a sequence in $\mathcal{SQ}$ converging to $M \in \mathcal{SQ}$ in $L^1$ and such that $\alpha_{min}(M^n) \leq c$ for every $n\in \mathbb{N}$.
			Let us show that $\alpha_{min}(M) \le c$.
			For all $n \in \mathbb{N}$ let $q^n$ be such that $M^{q^n}_T = M^n/E[M^n]$ and $q$ be such that $M^q_T = M/E[ M]$. 
			Let $\varepsilon >0$ be fixed. 
			For every $n\in \mathbb{N}$, there exists $\beta^n \in \mathcal{D}_+$ such that $D^{\beta^n}_{0,T} = E[M^n]$ and
			\begin{equation*}
				\varepsilon + \alpha_{min}(M^n) \geq E_{Q^{q^n}}\left[ \int_0^TD^{\beta^n}_{0,u}g_u^\ast(\beta^n_u,q^n_u)\,du \right].
			\end{equation*}
			Since $(M^n)$ converges to $M$ in $L^1$, the sequence $(E[M^n])$ converges to $E[M]$, with $E[M^n] > 0$ and $E[M] >0$. 
			Therefore, $(M^{q^n}_t)$ converges to $M^{q}_t$ in $L^1$ for all $t \in [0,T]$.
			We also introduce the martingales $M^n_t := E\left[M^n\mid \mathcal{F}_t \right]$ and $M_t := E\left[M\mid \mathcal{F}_t \right] $, $t\in [0,T]$.
			We choose a fast subsequence $(M^{n,m})$ such that $	 	P(\abs{M^{n,m}_T - M_T} \ge 1) < 2^{-n}/{m}$
			and for all $m \in \mathbb{N}$, define the stopping time
			\begin{equation*}
				\tau^m := \inf\Set{ t \in [0,T]: \abs{M^{n,m}_t - M_t} \ge m \text{ for some } n }.
			\end{equation*}
			Then, $(\tau^m)$ is a localizing sequence of stopping times since
			\begin{align*}
				P(\tau^m = T) & \ge 1 - P(\abs{M^{n,m}_T - M_T} \ge 1 \text{ for some } n) - P(\abs{M_T} \ge m-1)\\
				              & \ge 1 - \frac{1}{m} - \frac{E[\abs{M_T}]}{m - 1} \longrightarrow 1.
			\end{align*}
			For every $m$, the sequence $(M^{n,m}_{\tau^m} - M_{\tau^m})$ is bounded, therefore $(M^{n,m}_{\tau^m})$ converges to $(M_{\tau^m})$ in $L^2$.
			It follows by Burkholder-Davis-Gundy and Doob's inequalities that there exists a positive constant $C$ such that
			\begin{align*}
				E\left[\int_0^{\tau^m} \abs{ M^{n,m}_0q^{n,m}_uM^{q^{n,m}}_u - M_0q_uM^q_u }^2\,du \right] & =	E\left[\langle M^{n,m} - M \rangle_{\tau^m}^{2} \right]\\ 
				&\leq CE\left[ \left(\sup_{t \in [0,\tau^m]}\abs{M^{n,m}_t - M_t}\right)^2 \right] \xrightarrow[n \to \infty]{} 0.
			\end{align*}
			Thus, up to a subsequence, $(q^{n,m}M^{q^{n,m}}1_{[0,\tau^m]})$ converges $P \otimes dt$-a.s. to $qM^q1_{[0,\tau^m]}$.
			But since the sequence of strictly positive martingales $\bigl( M^{q^{n,m}} \bigr)$ converges $P \otimes dt$-a.s. to $M^q >0$, it follows that 
			\begin{equation*}
				\lim_{n \rightarrow \infty} q^{n,m}1_{[0,\tau^m]} = q1_{[0,\tau^m]} \quad  P \otimes dt\text{-a.s.}
			\end{equation*}
			Since $(\tau^m)$ converges $P$-a.s. to $T$ we obtain, by a diagonalization argument, another subsequence again denoted $(q^n)$ which converges $P\otimes dt$-a.s. to $q$.
			As for the convergence of the  sequence $(\beta^{n})$, since $ (\exp(-\int_0^T\beta^n_u\,du )) = \left( E\left[ M^{n}\right] \right)$ converges to $E\left[M \right]$, it follows that the sequence $( \int_0^T\beta^n_u\,du )$ converges to $-\log(E[M])$, and $( E[ \int_0^T\beta^{n}_u\,du ])$ is uniformly bounded.
			Hence, we can apply a compactness argument, see for instance \citep[Theorem 1.4]{delbaen03} applied on the product space, to obtain a sequence $(\tilde{\beta}^n)$ in the asymptotic convex hull of $(\beta^n)$ which converges $P\otimes dt$ to a positive predictable process $\beta$. 
			In addition, $D^\beta_{0,T} = E[M]$ since the sequences $(\int_0^T\beta^n_u\,du )$ and $( \int_0^T\tilde{\beta}^n_u\,du ) $ converge to the same limit.
			Now applying Fatou's lemma, convexity and lower-semicontinuity of $g^\ast$ lead us to
			\begin{multline*}
				\varepsilon +	\liminf_{n \rightarrow \infty}\alpha_{min}(M^{n}) \geq  E\left[ \int_0^T \liminf_{n \rightarrow \infty}M^{q^{n}}_uD^{\beta^{n}}_{0,u}g^\ast_u(\beta^{n}_u , q^{n}_u )\,du \right] \\
				\geq E\left[ \int_0^T \liminf_{n \rightarrow \infty}M^{q^{n}}_uD^{\tilde{\beta}^{n}}_{0,u}g^\ast_u(\tilde{\beta}^{n}_u , q^{n}_u )\,du \right] = E\left[ \int_0^T M^{q}_uD^{\beta}_{0,u}g^\ast_u(\beta_u , q_u )\,du \right] \geq \alpha_{\min}(M).
			\end{multline*}
			Once again the result is obtained by letting $\varepsilon$ tend to $0$.
			\end{enumerate}	
	\end{enumerate}
\end{proof}
We recover the robust representation of coherent (cash-subadditive) risk measures.
\begin{corollary}
	\label{cor:rep-coherent}
	Under the assumptions of Theorem \ref{thm:rep-nonbounded}, if the generator $g$ is positive homogeneous in the sense that
	\begin{equation*}
		g(\lambda y,\lambda z) = \lambda g(y,z)\quad \text{for all $\lambda > 0$ and $(y,z) \in \mathbb{R} \times\mathbb{R}^d$},
	\end{equation*}
	then $\mathcal{E}_0$ is also positive homogeneous and the dual representation of $\mathcal{E}_0$ reduces to
	\begin{equation*}
		\label{E:coherent}
		\mathcal{E}_0(X) = \sup_{(\beta,q) \in \mathcal{D}_+\times \mathcal{Q}}E_{Q^q}\bigl[ D^{\beta}_{0,T}X \bigr], \quad X \in \mathcal{H}^1.
	\end{equation*}
\end{corollary}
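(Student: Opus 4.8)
The plan is to exploit, at the level of the generator, the defining feature of coherent risk measures: positive homogeneity of $g$ forces its Fenchel--Legendre transform $g^\ast$ to be $\{0,+\infty\}$-valued, which collapses the penalty function $\alpha_0$ of Theorem \ref{thm:rep-nonbounded} to an indicator and leaves only the linear part $E_{Q^q}[D^\beta_{0,T}X]$ in the representation.

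First I would record that $g(0,0)=0$: by \ref{pos} we have $g(0,0)\ge 0$, while positive homogeneity gives $g(0,0)=g(\lambda 0,\lambda 0)=\lambda g(0,0)$ for every $\lambda>0$, whence $g(0,0)=0$. Fixing $(\beta,q)\in\mathbb{R}\times\mathbb{R}^d$ and setting $h(y,z):=-\beta y+qz-g(y,z)$, positive homogeneity of $g$ makes $h$ positively homogeneous of degree one, $h(\lambda y,\lambda z)=\lambda h(y,z)$ for $\lambda>0$. Consequently $g^\ast(\beta,q)=\sup_{(y,z)}h(y,z)$ is either $0$, when $h\le 0$ everywhere (the value $0$ being attained at the origin since $h(0,0)=-g(0,0)=0$), or $+\infty$, as soon as $h(y_0,z_0)>0$ for some $(y_0,z_0)$, because then $h(\lambda y_0,\lambda z_0)=\lambda h(y_0,z_0)\to+\infty$. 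Thus $g^\ast_u(\omega,\cdot,\cdot)\in\{0,+\infty\}$ and in particular $g^\ast\ge 0$, pointwise in $(\omega,u)$.

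Second, I would propagate this to the penalty. Since $D^\beta_{0,u}>0$ and $g^\ast\ge 0$, the integrand in $\alpha_0(\beta,q)=E_{Q^q}[\int_0^T D^\beta_{0,u}g^\ast_u(\beta_u,q_u)\,du]$ is nonnegative, and as $g^\ast$ only takes the values $0$ and $+\infty$ we get $\alpha_0(\beta,q)=0$ when $g^\ast_u(\beta_u,q_u)=0$ for $P\otimes dt$-a.e.\ $(\omega,u)$, and $\alpha_0(\beta,q)=+\infty$ otherwise. Feeding this dichotomy into the representation \eqref{eq:rep-nonbounded}, every pair with $\alpha_0(\beta,q)=+\infty$ contributes $-\infty$ and may be discarded, while on the remaining pairs the penalty vanishes, yielding
\[
	\mathcal{E}_0(X)=\sup_{(\beta,q)\in\mathcal{D}_+\times\mathcal{Q}}E_{Q^q}\bigl[D^\beta_{0,T}X\bigr],\quad X\in\mathcal{H}^1,
\]
the supremum being effectively restricted to the zero-penalty pairs $\{(\beta,q):g^\ast_u(\beta_u,q_u)=0\ P\otimes dt\text{-a.s.}\}$, which is the announced coherent representation.

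For positive homogeneity of $\mathcal{E}_0$ itself, the quickest route is at the level of supersolutions rather than through duality: if $(Y,Z)\in\mathcal{A}(X)$ then for $\lambda>0$ the scaled pair $(\lambda Y,\lambda Z)$ satisfies \eqref{eq:supersolutions} with terminal condition $\lambda X$, because $\lambda g_u(Y_u,Z_u)=g_u(\lambda Y_u,\lambda Z_u)$ and $\int\lambda Z\,dW=\lambda\int Z\,dW$ remains a supermartingale; hence $\mathcal{A}(\lambda X)=\lambda\mathcal{A}(X)$ (the reverse inclusion following by rescaling with $1/\lambda$), and the essential infimum defining $\bar Y$ scales, giving $\mathcal{E}_0(\lambda X)=\lambda\mathcal{E}_0(X)$. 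Alternatively, positive homogeneity is immediate from the reduced representation, each $X\mapsto E_{Q^q}[D^\beta_{0,T}X]$ being linear and the supremum of positively homogeneous functionals being positively homogeneous. The only genuinely delicate point is the first one, namely checking that positive homogeneity collapses $g^\ast$ to a $\{0,+\infty\}$-indicator with the pointwise-in-$(\omega,u)$ argument legitimate; the rest is bookkeeping on the representation already established in Theorem \ref{thm:rep-nonbounded}.
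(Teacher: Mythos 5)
Your proposal is correct and follows essentially the same route as the paper: positive homogeneity of $\mathcal{E}_0$ is obtained by scaling supersolutions ($(Y,Z)\in\mathcal{A}(X)$ iff $(\lambda Y,\lambda Z)\in\mathcal{A}(\lambda X)$), and the reduced representation follows from Theorem \ref{thm:rep-nonbounded} because the conjugate of a positively homogeneous convex function only takes the values $0$ and $+\infty$, so the penalty $\alpha_0$ becomes an indicator. You merely spell out the $\set{0,+\infty}$ dichotomy in more detail than the paper does, and your remark that the supremum is effectively over the zero-penalty pairs is the precise reading of the displayed formula (as the example following the corollary, with its set $\mathcal{K}$, confirms).
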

\begin{proof}
		Let $\lambda$ be strictly positive, and $(\mathcal{E}_0(\lambda X), Z)$ the minimal supersolution in $\mathcal{A}(\lambda X)$. By positive homogeneity of $g$, we have $( \mathcal{E}_0(\lambda X)/ \lambda, Z/\lambda) \in \mathcal{A}( X )$, therefore $\mathcal{E}_0(\lambda X) \geq \lambda \mathcal{E}_0(X)$. 
	Using the same reasoning on $\mathcal{A}(X)$ we have $\mathcal{E}_0(\lambda X) \leq \lambda\mathcal{E}_0(X)$, hence $\mathcal{E}_0$ is positive homogeneous.

	The representation \eqref{E:coherent} follows from Theorem \ref{thm:rep-nonbounded} since the convex conjugate of the positive homogeneous function $g$ is the indicator of a closed convex set (i.e. it is either $0$ or $\infty$.)
\end{proof}
Let us conclude this section with an example.
\begin{example}
	Let $X$ be any random variable in $\mathcal{H}^1 $.
	Consider the BSDE
	\begin{equation}
	\label{eq:exampleBSDE}
		dY_t = - g(Y_t, Z_t),dt + Z_t\,dW_t, \quad Y_T = X
	\end{equation}
	with generator $g$ defined on $\mathbb{R} \times \mathbb{R}^d$ by
	\begin{equation*}
		g(y,z) := \begin{cases}
						 z^2/y  & \text{ if } y > 0, \, z \in \mathbb{R}^d\\
						 0               & \text{ if } y \le 0, \, z = 0\\
						 + \infty        & \text{ if } y \le 0, \, z \in \mathbb{R}^d \setminus \Set{0}. 
				  \end{cases}
	\end{equation*}
	The function $g$ satisfies the conditions of Theorem \ref{thm:rep-nonbounded}. 
	Therefore, the minimal supersolution $\mathcal{E}^{g}(X)$ of Equation \eqref{eq:exampleBSDE} admits the dual representation \eqref{eq:rep-nonbounded}. 
	Moreover, defining
	\begin{equation*}
		\mathcal{K} : = \Set{(\beta, q) \in \mathcal{D}_+ \times \mathcal{Q}: \beta \ge \frac{1}{4}\norm{q}^2 },
	\end{equation*} 
	one can check that $g^\ast$ takes the value $0$ on $\mathcal{K}$ and $+\infty$ on the complement of $\mathcal{K}$.
	Thus,
	\begin{equation*}
		\mathcal{E}^{g}_0(X) = \sup_{(\beta,q) \in \mathcal{K}}E_{Q^q}\bigl[ D^{\beta}_{0,T}X \bigr].
	\end{equation*}
\end{example}
\section{Cash-Subadditive Risk Measures and BSDE}
	The operator $\mathcal{E}_0$ studied in the previous section can be seen as a risk measure.
	In fact, when the generator does not depend on $y$, the functional $\rho$ defined by $\rho(X) := \mathcal{E}_0(-X)$ is a convex risk measure in the sense of \citet{foellmer01}, and $u(X):= -\mathcal{E}_0(-X)$ defines a monetary utility function. 
	If the generator $g$ does depend on $y$ and satisfies \ref{dec}, then $\rho$ is instead a cash-subadditive risk measure as defined in \citep{ravanelli01}. 
	In particular, for all $m \geq 0$ holds $\rho( X - m ) \leq \rho( X ) + m $.

In this section we start with a cash-subadditive risk measure satisfying a given robust representation and show, in Theorem \ref{thm:rep-solution}, that such a risk measure must be the minimal supersolution of a BSDE.
Thus, we are given a dynamic cash-subadditive risk measure\footnote{Actually, this is only a risk measure up to a transformation as explained above.} of the form
\begin{equation}
	\label{risk_meas}
	\phi_{t}(X) := \esssup_{(\beta, q) \in \mathcal{D}_+\times\mathcal{Q}}E_{Q^q}\left[ D^\beta_{t,T} X - \int_{t}^TD^\beta_{t,u}f(\beta_u, q_u)\,du \Mid \mathcal{F}_{t} \right], \quad t\in [0,T],
\end{equation}
where $X$ is a random variable in $\mathcal{H}^1$ and $f: \mathbb{R}\times \mathbb{R}^d \rightarrow (-\infty, \infty ]$ a given proper function. 
A function $f$ is said to be 
\begin{enumerate}[label=(\textsc{Norm}),leftmargin=40pt]
	\item null at the origin if,  $f(0,0) = 0$.\label{norm}
\end{enumerate}

\begin{remark}
Since $D_{s,t}^\beta D_{t,u}^\beta=D_{s,u}^\beta$, the penalty function $\alpha$ defined by Equation \eqref{eq:rep-bounded-penalty} satisfies the following cocycle property introduced in \citep{Bion} for monetary convex risk measures:
\begin{equation}
\label{eq:cocycle}
	\alpha_{s,u}(\beta ,q) = \alpha_{s,t}(\beta,q) + E_{Q^q}\left[ D_{s,t}^\beta \alpha_{t,u}(\beta, q)\Mid \mathcal{F}_s \right]\quad \text{for every} \quad (\beta,q)\in \mathcal{D}_+ \times \mathcal{Q}.
\end{equation} 	
In the cash-additive case, the cocycle property takes the form
\begin{equation*}
	\alpha_{s,u}(q) = \alpha_{s,t}(q) + E_{Q^q}\left[\alpha_{t,u}(q)\mid \mathcal{F}_s \right].
\end{equation*}
Hence, 
the characterization of time-consistency in terms of the cocycle property given by \citep[Theorem 3.3]{Bion} shows that when $g$ does not depend on $y$, $\mathcal{E}$ is time-consistent even if the normalization condition $g(0) = 0$ is not assumed, compare \citep[Proposition 3.6]{DHK1101}. 
\end{remark}

In what follows we use the notation of the previous section. 
In particular, for any $q \in \mathcal{Q}$ we denote by $M^q$ the martingale density process of the probability measure $Q^q$ with respect to the reference measure $P$. 
We follow a method already put forth in \citet{BDH10} in the cash-additive case. The main idea is the following:
\begin{proposition}
	\label{Q-super}
 	For any $X \in \mathcal{H}^1$ and for each $(\beta,q) \in \mathcal{D}_+\times \mathcal{Q}$ the process 
	\begin{equation*}
		\varphi(X):=\left(D^\beta_{0,t}\phi_t(X) - \int_0^tD^\beta_{0,u}f(\beta_u,q_u)\,du\right)_{t\in[0,T]}
	\end{equation*}
 	is a $Q^q$-supermartingale.
\end{proposition}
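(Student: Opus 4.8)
The plan is to reduce the claimed $Q^q$-supermartingale property to a single one-step dynamic-programming inequality for $\phi$, and then to establish that inequality by a concatenation (pasting) argument on the dual variables $(\beta,q)$, exactly in the spirit of \citet{BDH10} and the cocycle/$m$-stability machinery of \citet{Bion}. Fix $(\beta,q)\in\mathcal{D}_+\times\mathcal{Q}$ and $0\le s\le t\le T$, and write $\varphi_t(X)$ for the time-$t$ value of the process. Since $D^\beta_{0,t}=D^\beta_{0,s}D^\beta_{s,t}$ and both $D^\beta_{0,s}$ and $\int_0^s D^\beta_{0,u}f(\beta_u,q_u)\,du$ are $\mathcal{F}_s$-measurable, a direct computation gives
\[
	E_{Q^q}\!\left[\varphi_t(X)\mid\mathcal{F}_s\right]-\varphi_s(X)
	= D^\beta_{0,s}\left(E_{Q^q}\!\left[D^\beta_{s,t}\phi_t(X)-\int_s^t D^\beta_{s,u}f(\beta_u,q_u)\,du\mid\mathcal{F}_s\right]-\phi_s(X)\right).
\]
As $D^\beta_{0,s}>0$, it suffices to prove
\[
	E_{Q^q}\!\left[D^\beta_{s,t}\phi_t(X)-\int_s^t D^\beta_{s,u}f(\beta_u,q_u)\,du\mid\mathcal{F}_s\right]\le\phi_s(X),
\]
which I will refer to as inequality $(\star)$.

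The second ingredient is a flow (tower) identity for the value. For a dual variable $(\beta',q')$ set
\[
	V_r(\beta',q'):=E_{Q^{q'}}\!\left[D^{\beta'}_{r,T}X-\int_r^T D^{\beta'}_{r,u}f(\beta'_u,q'_u)\,du\mid\mathcal{F}_r\right],
\]
so that $\phi_r(X)=\esssup_{(\beta',q')}V_r(\beta',q')$ by definition \eqref{risk_meas}. Using $D^{\beta'}_{r,u}=D^{\beta'}_{r,t}D^{\beta'}_{t,u}$, the tower property of $Q^{q'}$-conditional expectation, and the cocycle property \eqref{eq:cocycle} of the penalty, one obtains for $r=s$ the identity
\[
	V_s(\beta',q')=E_{Q^{q'}}\!\left[D^{\beta'}_{s,t}V_t(\beta',q')-\int_s^t D^{\beta'}_{s,u}f(\beta'_u,q'_u)\,du\mid\mathcal{F}_s\right].
\]

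Now comes the pasting step. The family $\{V_t(\beta',q')\}_{(\beta',q')}$ is upward directed: given two candidates, the variable coinciding with the better one on the corresponding $\mathcal{F}_t$-measurable set again lies in $\mathcal{D}_+\times\mathcal{Q}$, since this set is stable under such $\mathcal{F}_t$-concatenation. Hence there is an increasing sequence $(\beta^{(n)},q^{(n)})$ with $V_t(\beta^{(n)},q^{(n)})\uparrow\phi_t(X)$. For each $n$ define the concatenated variable $(\hat\beta^n,\hat q^n)$ equal to $(\beta,q)$ on $[s,t]$ and to $(\beta^{(n)},q^{(n)})$ on $(t,T]$; by the same stability it belongs to $\mathcal{D}_+\times\mathcal{Q}$, and the flow identity above specializes to
\[
	V_s(\hat\beta^n,\hat q^n)=E_{Q^q}\!\left[D^\beta_{s,t}V_t(\beta^{(n)},q^{(n)})-\int_s^t D^\beta_{s,u}f(\beta_u,q_u)\,du\mid\mathcal{F}_s\right].
\]
Since $V_s(\hat\beta^n,\hat q^n)\le\phi_s(X)$ by definition of the essential supremum, letting $n\to\infty$ and invoking monotone convergence on the right-hand side yields exactly $(\star)$. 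Finally, $X\in\mathcal{H}^1$ together with the boundedness of the densities $M^q$ (by the definition of $\mathcal{Q}$) and of $D^\beta_{0,\cdot}\le1$ ensures $\varphi_t(X)\in L^1(Q^q)$ for each $t$, so $\varphi(X)$ is a genuine $Q^q$-supermartingale.

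I expect the main obstacle to be the pasting argument: one must verify carefully that concatenating two admissible dual variables across time $t$ again produces an element of $\mathcal{D}_+\times\mathcal{Q}$ (the stability/$m$-stability needed both for upward directedness and for the concatenated $(\hat\beta^n,\hat q^n)$), and that the change of measure is propagated correctly through the tower property, so that the density increment of $Q^{\hat q^n}$ on $(t,T]$ depends only on $q^{(n)}$ and the inner conditional expectation genuinely equals $V_t(\beta^{(n)},q^{(n)})$. The monotone convergence passage also implicitly requires finiteness of $\phi_t(X)$, which again rests on $X\in\mathcal{H}^1$ and the boundedness of the relevant densities.
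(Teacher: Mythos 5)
Your argument is correct and follows essentially the same route as the paper's proof: upward directedness of the dual family at time $t$ via pasting, a monotone approximating sequence, concatenation of the fixed $(\beta,q)$ on $[s,t]$ with the approximating controls on $(t,T]$, identification of the result as an admissible candidate at time $s$ via the tower property, and a monotone passage to the limit inside the conditional expectation. The stability of $\mathcal{D}_+\times\mathcal{Q}$ under such concatenations, which you flag as the main obstacle, is invoked without further comment in the paper's proof as well.
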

\begin{proof}
	Let $0\leq s\leq t\leq T$. 
	We start by showing that the set 
	$$\Set{ E_{Q^{q}}\left[D^{\beta}_{t,T}X - \int_t^T D^{\beta}_{t,u} f(\beta_u,q_u)\,du \mmid \mathcal{F}_t \right]: (\beta,q) \in \mathcal{D}_+\times \mathcal{Q} }$$ is directed upward.
	Let $(\beta^1,q^1), (\beta^2, q^2) \in \mathcal{D}_+\times \mathcal{Q}$. 
	Let us define the stopping  time
	\begin{equation*}
		\tau := \inf\Set{ s > t: L^1_t < L^2_t  },
	\end{equation*}
	with $L^i_t := E_{Q^{q^i}}[D^{\beta^i}_{t,T}X - \int_t^T D^{\beta^i}_{t,u} f(\beta^i_u,q^i_u)\,du \mid \mathcal{F}_t ]$, $i = 1,2$, and put $\hat{q}: = q^11_{[0,\tau]} + q^21_{(\tau,T]} $ and $\hat{\beta} := \beta^11_{[0,\tau]} + \beta^21_{(\tau,T]} $.
	We have $(\hat{\beta}, \hat{q}) \in \mathcal{D}_+\times \mathcal{Q}$ and, by definition, $\hat{L}_t \ge \max\set{L^1_t, L^2_t}$, with $\hat{L}_t := E_{Q^{\hat{q}}}[D^{\hat{\beta}}_{t,T}X - \int_t^T D^{\hat{\beta}}_{t,u} f(\hat{\beta}_u,\hat{q}_u)\,du \mid \mathcal{F}_t ] $.
	
	Therefore, by \citep[Theorem A.32]{foellmer01}, there exists a sequence $(\beta^n, q^n) \subseteq \mathcal{D}_+\times\mathcal{Q}$ such that 
	\begin{equation*}
		\phi_t(X) = \lim_{n\rightarrow \infty } E_{Q^{q^n}}\left[D^{\beta^n}_{t,T}X - \int_t^T D^{\beta^n}_{t,u} f(\beta^n_u,q^n_u)\,du \mmid \mathcal{F}_t \right].
	\end{equation*}
	In addition, this convergence is monotone.
	Therefore, $\phi_t(X)$ is integrable, and it is also $ Q^q$-integrable for every $q\in \mathcal{Q}$ since $dQ^q/dP \in L^\infty$.
	Hence, for any $(\beta,q) \in \mathcal{D}_+\times \mathcal{Q}$, it holds
	\begin{multline*}
		 E_{Q^q}\left[\varphi_t(X)\Mid \mathcal{F}_s\right] = E_{Q^q}\left[ D^\beta_{0,t} \lim_{n\rightarrow \infty} E_{Q^{q^n}}\left[D^{\beta^n}_{t,T}X - \int_t^T D^{\beta^n}_{t,u} f(\beta^n_u,q^n_u)\,du \mmid \mathcal{F}_t \right]\mmid \mathcal{F}_s \right] \\
		  - E_{Q^q}\left[ \int_s^tD^\beta_{0,u}f(\beta_u, q_u)\,du \mmid \mathcal{F}_s \right]  
	 \shoveright{ - \int_0^s D^\beta_{0,u} f(\beta_u,q_u)\,du  } \\
	= \lim_{n\rightarrow \infty} D_{0,s}^\beta E_{Q^q}\left[E_{Q^{q^n}}\left[  D^\beta_{s,t}D^{\beta^n}_{t,T}X  - \int_t^T D^\beta_{s,t}D^{\beta^n}_{t,u}f(\beta_u^n,q_u^n)\,du -\int_s^t  D^\beta_{s,u} f(\beta_u,q_u)\,du\mmid \mathcal{F}_t \right] \mmid \mathcal{F}_s \right] \\
	 - \int_0^s D^\beta_{0,u} f(\beta_u,q_u)\,du,  
	\end{multline*}
    where the second equation follows by dominated convergence theorem.
    We put $\bar{\beta}^n = \beta1_{[0,t]} + \beta^n1_{(t,T]}$ and $\bar{q}^n = q1_{[0,t]} + q^n1_{(t,T]} $. 
	It follows that
	\begin{align*}
		 E_{Q^q}\left[\varphi_t(X)\Mid \mathcal{F}_s\right] &= D^\beta_{0,s}\lim_{n \rightarrow \infty}  E_{Q^{\bar{q}^n}}\left[ D^{\bar{\beta}^n}_{s,T} X - \int_s^T D^{\bar{\beta}^n}_{s,u} f(\bar{\beta}^n_u,\bar{q}^n_u)\,du \Mid \mathcal{F}_s \right]- \int_0^s D^\beta_{0,u} f(\beta_u,q_u)\,du\\
		   &\leq D^\beta_{0,s} \phi_s(X) - \int_0^s D^\beta_{0,u} f(\beta_u,q_u)\,du =\varphi_s(X),
	\end{align*}

where the inequality follows by definition of $\phi(X)$ and the fact that $(\bar{\beta}^n,\bar{q}^n) \in \mathcal{D}_+\times \mathcal{Q}$.
\end{proof}
Next we give two consequences of the previous result.
\begin{corollary}
\label{thm:q-martingale}
	Let $X \in \mathcal{H}^1$, suppose in addition that $\phi_0(X)$ admits a subgradient $(\beta, q)\in \mathcal{D}_+\times \mathcal{Q}$, i.e.  $(\beta, q)$ is such that $\phi_0(X) = E_{Q^q}\left[D^\beta_{0,T} X - \int_0^TD^\beta_{0,u} f(\beta_u,q_u)\,du \right]$. 
	Then for each $t\in [0,T]$ we have 
	\begin{equation}
	\label{t_subgrad}
		\phi_t(X) = E_{Q^q}\left[D^\beta_{t,T} X - \int_t^T D^\beta_{t,u} f(\beta_u,q_u)\,du \Mid \mathcal{F}_t \right],
	\end{equation}
	that is, $(\beta, q)$ is a subgradient of $\phi_t(X)$.
	Moreover, the process
	\begin{equation*}
		\left(D^\beta_{0,t} \phi_t(X) - \int_0^t D^\beta_{0,u} f(\beta_u,q_u)\,du\right)_{t\in[0,T]}
	\end{equation*}
	is a $Q^q$-martingale.
\end{corollary}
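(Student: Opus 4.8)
The plan is to read off both conclusions from Proposition \ref{Q-super} by exploiting that the hypothesis pins down the $Q^q$-expectation of the terminal value of the supermartingale $\varphi(X)$. First I would apply Proposition \ref{Q-super} to the very pair $(\beta,q)$ furnished by the subgradient hypothesis, so that $\varphi(X) = (D^\beta_{0,t}\phi_t(X) - \int_0^t D^\beta_{0,u}f(\beta_u,q_u)\,du)_{t\in[0,T]}$ is a $Q^q$-supermartingale; in particular it is $Q^q$-integrable, as already established in the proof of that proposition. Next I would evaluate the two endpoints. Since $D^\beta_{0,0}=1$ and $\int_0^0 = 0$, we have $\varphi_0(X) = \phi_0(X)$, which is deterministic because $\mathcal{F}_0$ is trivial. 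At the other end, plugging $t=T$ into \eqref{risk_meas} gives $\phi_T(X)=X$ (the discount factor and the integral both degenerate and $X$ is $\mathcal{F}_T$-measurable), whence $\varphi_T(X) = D^\beta_{0,T}X - \int_0^T D^\beta_{0,u}f(\beta_u,q_u)\,du$. The subgradient hypothesis is then precisely the statement $\varphi_0(X) = E_{Q^q}[\varphi_T(X)]$.

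The key step is the elementary principle that a supermartingale whose terminal expectation coincides with its initial value is in fact a martingale. Concretely, the map $t \mapsto E_{Q^q}[\varphi_t(X)]$ is nonincreasing by the supermartingale property, while the endpoint identity forces $E_{Q^q}[\varphi_0(X)] = E_{Q^q}[\varphi_T(X)]$; hence this map is constant on $[0,T]$. For $0\le s\le t\le T$ the random variable $\varphi_s(X) - E_{Q^q}[\varphi_t(X)\mid \mathcal{F}_s]$ is nonnegative (the supermartingale inequality) and has zero $Q^q$-expectation (constancy of $t\mapsto E_{Q^q}[\varphi_t(X)]$), so it vanishes $Q^q$-almost surely. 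This yields $\varphi_s(X) = E_{Q^q}[\varphi_t(X)\mid \mathcal{F}_s]$, i.e. $\varphi(X)$ is a $Q^q$-martingale, which is the last assertion of the corollary.

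Finally I would extract the pointwise identity \eqref{t_subgrad} from the martingale property $\varphi_t(X) = E_{Q^q}[\varphi_T(X)\mid \mathcal{F}_t]$. Writing this out and using that $\int_0^t D^\beta_{0,u}f(\beta_u,q_u)\,du$ is $\mathcal{F}_t$-measurable gives $D^\beta_{0,t}\phi_t(X) = E_{Q^q}[D^\beta_{0,T}X - \int_t^T D^\beta_{0,u}f(\beta_u,q_u)\,du\mid \mathcal{F}_t]$; dividing by the strictly positive, $\mathcal{F}_t$-measurable factor $D^\beta_{0,t}$ and using the multiplicativity $D^\beta_{0,u} = D^\beta_{0,t}D^\beta_{t,u}$ for $u\ge t$ recovers exactly \eqref{t_subgrad}. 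There is no serious analytic obstacle here: the only points requiring care are the justification of integrability (inherited from Proposition \ref{Q-super}) and the triviality of $\mathcal{F}_0$ that makes $\phi_0(X)$ a constant; the genuine content of the argument is the supermartingale-to-martingale upgrade driven by the attained supremum.
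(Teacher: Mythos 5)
Your argument is correct and rests on the same ingredients as the paper's own proof: Proposition \ref{Q-super} applied to the optimal pair $(\beta,q)$, together with the observation that attainment of the supremum at time $0$ pins down the $Q^q$-expectation of the terminal value of the supermartingale $\varphi(X)$. The only difference is the order of deduction — the paper first derives \eqref{t_subgrad} by combining the expectation inequality $E_{Q^q}\left[\varphi_t(X)\right]\le \varphi_0(X)$ with the definitional pointwise bound $\phi_t(X)\ge E_{Q^q}\left[D^\beta_{t,T}X-\int_t^T D^\beta_{t,u}f(\beta_u,q_u)\,du\mid\mathcal{F}_t\right]$ and then reads off the martingale property, whereas you first upgrade the supermartingale to a $Q^q$-martingale via the constant-expectation principle and then deduce \eqref{t_subgrad}; both routes are valid and essentially equivalent.
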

\begin{proof}
	Let $(\beta, q) \in \mathcal{D}_+\times\mathcal{Q}$ be such that 
	\begin{equation*}
		\phi_0(X) = E_{Q^q}\left[D^{\beta}_{0,T} X - \int_0^TD^\beta_{0,u} f(\beta_u,q_u)\,du \right].
	\end{equation*}
	By the previous proposition and the choice of $(\beta, q)$ we have for any $t \in [0,T]$  
	\begin{equation*}
		E_{Q^q}\left[D^\beta_{0,t} \phi_t(X) - \int_0^tD^\beta_{0,u} f(\beta_u,q_u)\,du \right]   \leq \phi_0(X) =  E_{Q^q}\left[ D^\beta_{0,T} X - \int_0^TD^\beta_{0,u} f(\beta_u,q_u)\,du \right],
	\end{equation*}
	from which ensues
	\begin{align*}
		E_{Q^q}\left[ D^\beta_{0,t} \phi_t(X) \right] & \leq E_{Q^q}\left[D^\beta_{0,T} X - \int_t^T D^\beta_{0,u} f(\beta_u,q_u)\,du \right]\\
 												  & = E_{Q^q}\left[ D^\beta_{0,t}  \left( D^\beta_{t,T} X - \int_t^T D^\beta_{t,u} f(\beta_u,q_u)\,du \right)\right].
	\end{align*}
	Since we have 
	\begin{equation*}
		\phi_t(X) \geq E_{Q^q}\left[ D^\beta_{t,T} X - \int_t^T D^\beta_{t,u} f(\beta_u,q_u)\,du \mmid \mathcal{F}_t\right],
	\end{equation*}
	and $0 < D^\beta_{0,t}  < \infty $ we conclude that 
	\begin{equation*}
		\phi_t(X) = E_{Q^q}\left[D^\beta_{t,T} X - \int_t^T D^\beta_{t,u} f(\beta_u,q_u)\,du \mmid \mathcal{F}_t \right] \quad \text{$Q^q$-a.s.}
	\end{equation*}

	From Equation \eqref{t_subgrad} we have, for all $t \in [0,T]$,
	\begin{equation*}
		D^\beta_{0,t} \phi_t(X) - \int_0^t D^\beta_{0,u} f(\beta_u,q_u)\,du
 		= E_{Q^q}\left[ D^\beta_{0,T} X - \int_0^T D^\beta_{0,u} f(\beta_u,q_u)\,du \mmid \mathcal{F}_t \right] \text{ $Q^q$-a.s.}
	\end{equation*}
\end{proof}
\begin{corollary}
	\label{decomp}
	Assume that the function $f$ satisfies \ref{norm}. 
	Then, for every $X \in \mathcal{H}^1$ the process $(\phi_t(X))_{t\in[0,T]}$ is a $P$-supermartingale and admits a Doob-Meyer decomposition of the form $\phi(X) = \phi_0(X) + M - A$ where $A$ is a c\`adl\`ag adapted and increasing process with $A_0 = 0$ and $M$ a continuous local martingale.
\end{corollary}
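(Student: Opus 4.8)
The plan is to read off the $P$-supermartingale property directly from Proposition \ref{Q-super} and then upgrade it to a Doob--Meyer decomposition. First I would specialise Proposition \ref{Q-super} to the trivial dual parameter $(\beta,q)=(0,0)$. This pair lies in $\mathcal{D}_+\times\mathcal{Q}$ (the associated stochastic exponential is the constant $1$, hence in $L^\infty$, and the zero process is admissible for $\mathcal{D}_+$), it gives $D^0_{0,t}\equiv 1$ and $Q^0=P$, and by \ref{norm} we have $f(0,0)=0$. Consequently the process $\varphi(X)$ of Proposition \ref{Q-super} collapses to $\varphi_t(X)=\phi_t(X)$, so that $(\phi_t(X))_{t\in[0,T]}$ is a $P$-supermartingale. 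Its integrability is already guaranteed by the proof of Proposition \ref{Q-super}, where $\phi_t(X)$ is shown to be integrable.

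Next I would produce a \cadlag\ modification so that the Doob--Meyer theorem applies. By the standard criterion for \cadlag\ regularisation of supermartingales under the usual conditions, it suffices to check that $t\mapsto E[\phi_t(X)]$ is right-continuous. This map is non-increasing by the supermartingale property. On the other hand, writing $L^{\beta,q}_t:=E_{Q^q}[D^\beta_{t,T}X-\int_t^TD^\beta_{t,u}f(\beta_u,q_u)\,du\mid\mathcal{F}_t]$, each $t\mapsto E[L^{\beta,q}_t]$ is right-continuous: as $t_n\downarrow t$ the discount factors $D^\beta_{t_n,\cdot}$ converge (the map $t\mapsto\int_t^\cdot\beta$ is continuous), $\mathcal{F}_{t_n}\downarrow\mathcal{F}_t$ along the right-continuous filtration, and $dQ^q/dP\in L^\infty$ together with $X\in\mathcal{H}^1$ provide the domination needed to pass to the limit. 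Since $E[\phi_t(X)]=\sup_{(\beta,q)}E[L^{\beta,q}_t]$ (the defining family is directed upward, as established inside the proof of Proposition \ref{Q-super}, so the essential supremum commutes with the expectation along the monotone approximating sequence), $t\mapsto E[\phi_t(X)]$ is a supremum of right-continuous functions and hence lower semicontinuous; combined with monotonicity this forces right-continuity. I expect the delicate point to be the uniform integrability in the limit $t_n\downarrow t$, in particular controlling the penalty term $\int_t^TD^\beta_{t,u}f(\beta_u,q_u)\,du$, which is where one must restrict to parameters with finite penalty and invoke the estimates of the previous section.

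With a \cadlag\ version in hand, the general Doob--Meyer decomposition for \cadlag\ supermartingales yields $\phi(X)=\phi_0(X)+M-A$, where $A$ is a predictable (in particular adapted), \cadlag, increasing process with $A_0=0$ and $M$ is a local martingale with $M_0=0$. Finally, continuity of $M$ follows from the probabilistic setting: since $(\mathcal{F}_t)$ is generated by the Brownian motion $W$, the martingale representation theorem shows that every local martingale is a stochastic integral against $W$ and is therefore continuous. Hence $M$ is a continuous local martingale, completing the decomposition. The only substantial work is the regularity step of the second paragraph; the supermartingale property and the continuity of $M$ are essentially immediate from Proposition \ref{Q-super} and the Brownian structure, respectively.
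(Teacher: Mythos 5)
Your first and third steps coincide with the paper's: the $P$-supermartingale property is read off Proposition \ref{Q-super} at $(\beta,q)=(0,0)$ together with \ref{norm}, exactly as in the paper, and the continuity of $M$ via martingale representation in the Brownian filtration is the intended justification there as well. The divergence, and the gap, is in the regularization step. You reduce the existence of a \cadlag\ modification to right-continuity of $t\mapsto E[\phi_t(X)]$ and propose to get it from right-continuity of each $t\mapsto E[L^{\beta,q}_t]$, passing to the supremum (that last passage is fine: upward directedness gives $E[\phi_t(X)]=\sup_{(\beta,q)}E[L^{\beta,q}_t]$, and a non-increasing function that is a supremum of right-continuous functions is right-continuous). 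But the right-continuity of each $t\mapsto E[L^{\beta,q}_t]$ is precisely where the argument is incomplete: writing $E[L^{\beta,q}_{t_n}]=E\bigl[\frac{M^q_T}{M^q_{t_n}}\bigl(D^\beta_{t_n,T}X-\int_{t_n}^TD^\beta_{t_n,u}f(\beta_u,q_u)\,du\bigr)\bigr]$, the pointwise limit as $t_n\downarrow t$ is clear, but a dominating function would have to control $\abs{X}\,\sup_s (M^q_s)^{-1}$, and while $M^q_T\in L^\infty$, the running infimum of $M^q$ has no uniform lower bound; neither $dQ^q/dP\in L^\infty$ nor $X\in\mathcal{H}^1$ produces an integrable dominant, and the estimates of the preceding section do not address this. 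So the uniform integrability you flag as ``the delicate point'' is a genuine unresolved step, not a routine one.

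The paper avoids the issue by arguing pathwise rather than in expectation. It sets $\phi^+_t(X):=\lim_{s\downarrow t,\,s\in\mathbb{Q}}\phi_s(X)$, obtains $\phi^+_t(X)\le\phi_t(X)$ from conditional Fatou applied to the supermartingale inequality (the lower bound needed for Fatou being $\phi_s(X)\ge E[X\mid\mathcal{F}_s]$), and obtains $\phi^+_t(X)\ge\phi_t(X)$ because each process $L^{\beta,q}$ is, up to continuous finite-variation terms, a continuous martingale in the Brownian filtration, so that $\phi^+_t(X)\ge\lim_{s\downarrow t}L^{\beta,q}_s=L^{\beta,q}_t$ almost surely, and one then takes the essential supremum over $(\beta,q)$. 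This uses only almost-sure continuity of the dual family and no domination of the densities; \citep[Proposition 1.3.14]{Karatzas1991} then yields the \cadlag\ supermartingale modification and hence the Doob--Meyer decomposition. Replacing your second paragraph by this pathwise comparison closes the gap; the remainder of your argument stands.
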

\begin{proof}
	The $P$-supermartingale property of $\phi(X)$ follows from Proposition \ref{Q-super} and the fact that $f(0,0) = 0$. Let us show that $\phi(X)$ has a c\`adl\`ag modification which is still a $P$-supermartingale. 
	Let $t \in [0,T]$, since $\phi(X)$ is a $P$-supermartingale, for all $s \in [t, T] \cap \mathbb{Q}$ we have $E[\phi_s(X) \mid \mathcal{F}_t] \leq \phi_t(X)$. 
	Hence, by Fatou's lemma and due to the fact that our filtration satisfies the usual conditions we obtain the inequality $\phi^+_t(X) \leq \phi_t(X)$, where
	\begin{equation*}
		\phi^+_t(X) := \lim_{s\downarrow t, s\in \mathbb{Q}}\phi_s(X).
	\end{equation*}
	On the other hand by continuity of martingales we have, for all $(\beta, q) \in \mathcal{D}_+\times\mathcal{Q} $,
	\begin{equation*}
		\phi^+_t(X) \geq  E_{Q^q}\left[ D^\beta_{t,T} X - \int_t^T D^\beta_{t,u} f(\beta_u,q_u)\,du \mmid \mathcal{F}_t \right] \text{ $P$-a.s.,}
	\end{equation*}
	so that taking the supremum with respect to $\beta, q$ yields $\phi^+_t(X) \geq \phi_t(X)$ $P$-a.s., thus we have $\phi^+(X) = \phi(X)$ $P$-a.s.
	We conclude by \citep[Proposition 1.3.14]{Karatzas1991} that  $\phi(X)$ has a c\`adl\`ag modification which is again a supermartingale. This path regularity of $\phi(X)$ ensures that it admits a Doob-Meyer decomposition.
\end{proof}

Now we want to link the dynamic risk measure defined by Equation \eqref{risk_meas} to a BSDE. 
In that regard, we assume that $f$ is \ref{jconv} and \ref{lsc}, and we denote by $g$ the function defined on $\mathbb{R}\times \mathbb{R}^d $ by
\begin{equation*}
g(y,z) := \sup_{\beta \ge 0; q \in \mathbb{R}^d}\Set{-\beta y + qz - f(\beta,q)}.
\end{equation*}
The function $g$ is \ref{dec} and if $f$ is \ref{norm} then $g$ is \ref{pos}. 
\begin{theorem}
	\label{thm:rep-solution}
	Assume that the function $f$ satisfies \ref{jconv}, \ref{lsc} and \ref{norm}.
	For all $X \in \mathcal{H}^1$, there exists a unique predictable $d$-dimensional process $Z$ such that $(\phi(X),Z)$ is the minimal supersolution of the BSDE with generator $g$ and terminal condition $X$.
\end{theorem}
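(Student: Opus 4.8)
The plan is to exhibit $Z$ by reading off the semimartingale decomposition of $\phi(X)$, to check that $(\phi(X),Z)$ solves the supersolution inequality \eqref{eq:equiv_supersol}, and finally to identify the pair with the minimal supersolution through the first inequality of the dual representation. First I would record that $\phi_T(X)=X$, which is immediate from \eqref{risk_meas} since $D^\beta_{T,T}=1$ and the integral vanishes. By Corollary \ref{decomp}, $\phi(X)$ is a c\`adl\`ag $P$-supermartingale with a Doob-Meyer decomposition $\phi(X)=\phi_0(X)+M-A$, where $M$ is a continuous local martingale and $A$ is adapted, increasing and c\`adl\`ag with $A_0=0$. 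Since the filtration is Brownian, the martingale representation theorem furnishes a predictable $Z$ with $M=\int Z\,dW$; writing the decomposition between $t$ and $T$ and using $\phi_T(X)=X$ gives $\phi_t(X)=X+(A_T-A_t)-\int_t^T Z_u\,dW_u$.

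The crucial step is to show $dA_t\ge g_t(\phi_t(X),Z_t)\,dt$. Fix $(\beta,q)\in\mathcal{D}_+\times\mathcal{Q}$. By Proposition \ref{Q-super}, the process $D^\beta_{0,t}\phi_t(X)-\int_0^t D^\beta_{0,u}f(\beta_u,q_u)\,du$ is a $Q^q$-supermartingale; computing its drift by It\^o's formula and Girsanov's theorem and dividing by the strictly positive $D^\beta_{0,t}$, the supermartingale property forces $dA_t\ge(-\beta_t\phi_t(X)+q_tZ_t-f(\beta_t,q_t))\,dt$. Splitting $A$ into its absolutely continuous and singular parts and comparing densities, the absolutely continuous density $a_t$ satisfies $a_t\ge-\beta_t\phi_t(X)+q_tZ_t-f(\beta_t,q_t)$ $P\otimes dt$-a.e., for each fixed $(\beta,q)$. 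Taking the supremum over $(\beta,q)$ recovers $a_t\ge g_t(\phi_t(X),Z_t)$ by the very definition of $g$. Consequently $K_t:=A_t-\int_0^t g_u(\phi_u(X),Z_u)\,du$ is increasing with $K_0=0$, and substituting back shows that $(\phi(X),Z)$ satisfies \eqref{eq:equiv_supersol}. Admissibility of $Z$ follows since, choosing $(\beta,q)=(0,0)$ in \eqref{risk_meas}, one has $\phi_t(X)\ge E[X\mid\mathcal{F}_t]$, so $M=\phi(X)-\phi_0(X)+A$ is a continuous local martingale dominated below by the $\mathcal{H}^1$-martingale $E[X\mid\mathcal{F}_\cdot]-\phi_0(X)$, hence a supermartingale. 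Thus $(\phi(X),Z)\in\mathcal{A}(X)$.

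To conclude, $g$ is convex, lower semicontinuous and positive (a supremum of affine maps, with $g\ge 0$ by \ref{norm}) and decreasing, so Theorem \ref{thm:existence} provides the minimal supersolution $\bar Y=\mathcal{E}(X)$, and minimality gives $\bar Y_t\le\phi_t(X)$. For the reverse inequality, the definition of $g$ yields $g^\ast(\beta,q)\le f(\beta,q)$ for every $\beta\ge 0$; running the first-inequality computation of the proofs of Theorems \ref{thm:rep-bounded1} and \ref{thm:rep-nonbounded} on $\bar Y$ gives, for each $(\beta,q)$, the bound $\bar Y_t\ge E_{Q^q}[D^\beta_{t,T}X-\int_t^T D^\beta_{t,u}g^\ast_u(\beta_u,q_u)\,du\mid\mathcal{F}_t]\ge E_{Q^q}[D^\beta_{t,T}X-\int_t^T D^\beta_{t,u}f(\beta_u,q_u)\,du\mid\mathcal{F}_t]$, so $\bar Y_t\ge\phi_t(X)$ after taking the essential supremum. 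Hence $\phi(X)=\bar Y$ is the minimal supersolution, and $Z$ is unique because the continuous local martingale part $\int Z\,dW$ of the now-fixed process $\phi(X)$ is determined by the uniqueness of the Doob-Meyer decomposition, which pins down $Z$ up to $P\otimes dt$-null sets.

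The step I expect to be the main obstacle is the passage from the family of inequalities ``$a_t\ge-\beta_t\phi_t(X)+q_tZ_t-f(\beta_t,q_t)$ for each fixed $(\beta,q)$'' to the single inequality $a_t\ge g_t(\phi_t(X),Z_t)$: the exceptional null set depends on $(\beta,q)$, the supremum defining $g$ ranges over an uncountable set, and constant dual controls need not lie in $\mathcal{Q}$, so they must be localized by stopping times as in Proposition \ref{g_expectation}. Making this rigorous requires selecting a countable dense family of localized controls and using the lower semicontinuity and convexity of $f$ to ensure that the countable supremum still equals $g$.
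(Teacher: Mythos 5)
Your proposal is correct and follows essentially the same route as the paper: Doob--Meyer plus martingale representation to extract $Z$, It\^o/Girsanov together with the $Q^q$-supermartingale property of Proposition \ref{Q-super} to force $dA_t \ge g_t(\phi_t(X),Z_t)\,dt$, and the first-inequality computation of Theorems \ref{thm:rep-bounded1} and \ref{thm:rep-nonbounded} combined with $g^\ast \le f$ to obtain minimality. The measurability/null-set issue you flag in passing from the pointwise inequalities indexed by $(\beta,q)$ to the supremum $g_t(\phi_t(X),Z_t)$ is a genuine technical point that the paper's proof also elides with the phrase ``since $\beta$ and $q$ were taken arbitrary,'' and your sketched remedy (countable dense family of localized controls plus lower semicontinuity of $f$) is the standard way to close it.
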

\begin{proof}
	\emph{Supersolution property:} 
 		Let $X \in \mathcal{H}^1$. We start by proving that there exists $Z$ such that $(\phi(X), Z)$ is a supersolution of the BSDE with generator $g$ and terminal condition $X$. 
 		By Corollary \ref{decomp} there exist processes $A$ and $M$ such that $\phi_t(X) = \phi_0(X) + M_t - A_t$, and by martingale representation there exists a process $Z \in \mathcal{L}$ such that 
		\begin{equation}
		\label{decomp-rep}
			\phi_t(X) = \phi_0(X) + \int_0^tZ_u\,dW_u - A_t.
		\end{equation}
		By definition of $\phi(X)$ and Equation \eqref{decomp-rep}, $\int_0^tZ_u\,dW_u \ge E\left[X \mid\mathcal{F}_t \right] -\phi_0(X) $. 
		Thus, $\int Z\,dW$ is a supermartingale as a local martingale bounded from below by a martingale. 
		Let $(\beta, q) \in \mathcal{D}_+\times\mathcal{Q}$. 
		Applying It\^o's formula to $D^\beta_{0,t}\phi_t(X)$ leads us to
		\begin{align*}
			d\left(D^\beta_{0,t}\phi_t(X) \right) & = -\beta_tD^\beta_{0,t}\phi_t(X)\,dt + D^\beta_{0,t}d\phi_t(X)\\
									   			  & = -\beta_tD^\beta_{0,t}\phi_t(X)\,dt + D^\beta_{0,t}\left( -dA_t + Z_t\,dW_t \right)\\
 												  & = -\beta_tD^\beta_{0,t}\phi_t(X)\,dt + D^\beta_{0,t}\left(-dA_t + Z_tq_t\,dt \right) + D^\beta_{0,t} Z_t\,dW_t^{Q^q}.
		\end{align*}
		Therefore,
		\begin{multline}
		\label{q-b_decomp}
 			d\left(D^\beta_{0,t}\phi_t(X) - \int_0^tD^\beta_{0,u} f(\beta_u,q_u)\,du \right)\\
  			= D^\beta_{0,t} \left( -\beta_t\phi_t(X)\,dt  -dA_t + Z_tq_t\,dt - f(\beta_t,q_t)\,dt\right) + D^\beta_{0,t} Z_t\,dW_t^{Q^q}.
		\end{multline}
		By the $Q^q$-supermartingale property proved in Proposition \ref{Q-super}, we have
		\begin{equation*}
			dA_t \geq \left(-\beta_t\phi_t(X)  + q_tZ_t - f(\beta_t,q_t)\right)\,dt. 
		\end{equation*}
		Since $\beta$ and $q$ were taken arbitrary, it holds
		\begin{equation}
		\label{A-g}
			dA_t \geq g(\phi_t(X),Z_t)\,dt.
		\end{equation}
		Hence Equation \eqref{decomp-rep} gives, for all $0\leq s\leq t\leq T$,
		\begin{equation*}
			\phi_s(X) - \int_s^tg(\phi_u(X),Z_u)\,du + \int_s^tZ_u\,dW_u \geq \phi_t(X),
		\end{equation*}
		which shows that $(\phi(X), Z) $ is an admissible supersolution.
		
		\emph{Minimality:}
		Showing that the process $\phi(X)$ is minimal is done using exactly the same arguments as those used to prove Equation \eqref{eq:rep_h1_first-inequality} in the second step of the proof of Theorem \ref{thm:rep-nonbounded} and the first part of the proof of Theorem \ref{thm:rep-bounded1}. 
		Replacing $0$ by $t$ and the expectation by the conditional expectation in the proof of Equation \eqref{eq:rep_h1_first-inequality} does not affect the reasoning.
		Recalling that since $g$ is \ref{jconv}, \ref{dec} and \ref{pos} the minimal supersolution is unique concludes the proof.
\end{proof}
\begin{theorem}
	Assume that the function $f$ satisfies \ref{jconv}, \ref{lsc} and \ref{norm}. 
	Let $X \in \mathcal{H}^1$, if $\phi_0(X)$ admits a subgradient $(\beta, q) \in \mathcal{D}_+ \times \mathcal{Q}$ then the minimal supersolution $(\phi(X),Z)$ is actually a solution. 

	In addition, for $P\otimes dt$-almost all $(\omega,t) \in \Omega\times [0,T] $, $(\beta_t, q_t) \in \partial g(\omega,t,\phi_t(X),Z_t) $, subgradient of $g$ with respect to $(\phi_t(X), Z_t)$.
\end{theorem}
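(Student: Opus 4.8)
The plan is to upgrade the $Q^q$-supermartingale property of Proposition~\ref{Q-super} to a genuine martingale property using the subgradient hypothesis, and then to read off from the vanishing of the corresponding drift that the increasing process of $\phi(X)$ is absolutely continuous with density $g(\phi(X),Z)$.

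First I would fix the subgradient $(\beta,q)\in\mathcal{D}_+\times\mathcal{Q}$ of $\phi_0(X)$. By Corollary~\ref{thm:q-martingale} the process $\varphi_t:=D^\beta_{0,t}\phi_t(X)-\int_0^tD^\beta_{0,u}f(\beta_u,q_u)\,du$ is a true $Q^q$-martingale, and $(\beta,q)$ is a subgradient of $\phi_t(X)$ for every $t$. From the proof of Theorem~\ref{thm:rep-solution} I retain the decomposition \eqref{decomp-rep}, $\phi_t(X)=\phi_0(X)+\int_0^tZ_u\,dW_u-A_t$, where $A$ is the predictable increasing process furnished by the Doob--Meyer decomposition (Corollary~\ref{decomp}), together with the It\^o expansion \eqref{q-b_decomp}. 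Under $Q^q$ the latter writes $\varphi$ as $\varphi_0+V+N$, where $N:=\int_0^{\cdot}D^\beta_{0,u}Z_u\,dW^{Q^q}_u$ is a continuous $Q^q$-local martingale (Girsanov applies since $q\in\mathcal{Q}$ has bounded density) and
\begin{equation*}
	V_t=\int_0^tD^\beta_{0,u}\bigl(-\beta_u\phi_u(X)+q_uZ_u-f(\beta_u,q_u)\bigr)\,du-\int_0^tD^\beta_{0,u}\,dA_u
\end{equation*}
is a predictable process of finite variation.

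The crux is to show $V\equiv 0$. Since $\varphi$ is a $Q^q$-martingale and $N$ a $Q^q$-local martingale, $\varphi_0+V=\varphi-N$ is a $Q^q$-local martingale; being predictable and of finite variation it must be constant, hence $V\equiv 0$. As $D^\beta_{0,u}>0$, this forces $A$ to be absolutely continuous with $dA_u=(-\beta_u\phi_u(X)+q_uZ_u-f(\beta_u,q_u))\,du$. I then sandwich this density: the supersolution property \eqref{A-g} gives $dA_u\ge g(\phi_u(X),Z_u)\,du$, while the definition of $g$ together with $\beta_u\ge 0$ gives $-\beta_u\phi_u(X)+q_uZ_u-f(\beta_u,q_u)\le g(\phi_u(X),Z_u)$. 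Both inequalities must therefore be equalities $P\otimes dt$-a.s., yielding $dA_u=g(\phi_u(X),Z_u)\,du$ and the attainment $-\beta_u\phi_u(X)+q_uZ_u-f(\beta_u,q_u)=g(\phi_u(X),Z_u)$.

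Inserting the first equality into \eqref{decomp-rep} and using $\phi_T(X)=X$ (immediate from \eqref{risk_meas}) shows that the increasing process $K$ in the equivalent formulation \eqref{eq:equiv_supersol} vanishes; thus $(\phi(X),Z)$ satisfies the BSDE as an equality and is a solution. The second equality says that the supremum defining $g(\phi_u(X),Z_u)$ is attained at $(\beta_u,q_u)$, which, for the conjugacy pairing used throughout, is exactly the relation $(\beta_u,q_u)\in\partial g(\phi_u(X),Z_u)$ for $P\otimes dt$-almost every $(\omega,t)$. The only delicate point I anticipate is the vanishing of $V$: one must ensure that $A$ is predictable so that the canonical decomposition of the special semimartingale $\varphi$ is unique, and that $N$ is genuinely a $Q^q$-local martingale; both hold here because the filtration is generated by $W$ and $q\in\mathcal{Q}$ yields a bounded density.
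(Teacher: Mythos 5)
Your proposal is correct and follows essentially the same route as the paper: invoke Corollary \ref{thm:q-martingale} to get the $Q^q$-martingale property, read off from the decomposition \eqref{q-b_decomp} that $dA_t=(-\beta_t\phi_t(X)+q_tZ_t-f(\beta_t,q_t))\,dt$, and sandwich this against \eqref{A-g} and the definition of $g$ to conclude both that $(\phi(X),Z)$ solves the BSDE and that $(\beta_t,q_t)\in\partial g(\phi_t(X),Z_t)$. Your only addition is to spell out why the drift vanishes (a predictable finite-variation local martingale is constant), a step the paper leaves implicit.
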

\begin{proof}
 	Let $X \in \mathcal{H}^1$ and $(\beta,q) \in \mathcal{D}_+\times \mathcal{Q}$ be a subgradient of $\phi_0(X)$. 
 	Then, by Corollary \ref{thm:q-martingale} and the decomposition appearing in Equation \eqref{q-b_decomp}, we have
	\begin{equation*}
		dA_t = \left(-\beta_t\phi_t(X) + q_tZ_t - f(\beta_t,q_t) \right)\,dt.
	\end{equation*}
	Definition of $g$ and Equation \eqref{A-g} give
	\begin{equation*}
		g(\phi_t(X),Z_t)\,dt \geq \left(-\beta_t\phi_t(X) + q_tZ_t - f(\beta_t,q_t) \right)\,dt = dA_t \geq g(\phi_t(X),Z_t)\,dt.
	\end{equation*}
	Then, $dA_t = g(\phi_t(X), Z_t)\,dt$, showing that $(\beta,q) \in \partial g(\phi(X), Z) $ $P\otimes dt$-a.s.
	Equation \eqref{decomp-rep} yields
	\begin{equation*}
		\phi_t(X) = X - \int_t^Tg(\phi_u(X), Z_u)\,du + \int_t^TZ_u\,dW_u.
	\end{equation*}
	Hence $(\phi(X),Z)$ is a solution.
\end{proof}
We conclude by the following complete characterization of the minimal supersolution suggested by Corollary \ref{thm:rep-bounded} and Theorem \ref{thm:rep-solution}.
\begin{theorem}
	Assume that the function $g$ satisfies \ref{jconv}, \ref{dec} and \ref{pos}, $g^\ast$satisfies \ref{norm}. 
	If $X \in L^{\infty}$, then the following are equivalent:
	\begin{itemize}
		\item[(i)] There exists a predictable $d$-dimensional process $Z$ such that $(\mathcal{E}(X), Z)$ is the minimal supersolution of the BSDE with terminal condition $X$ and driver $g$.
		\item[(ii)] The functional $\mathcal{E}$ admits the representation
		\begin{equation*}
			\mathcal{E}_{t}(X) = \esssup_{(\beta, q) \in \mathcal{D}_+\times\mathcal{Q}}E_{Q^q}\left[ D^\beta_{t,T} X - \int_{t}^T D^\beta_{t,u} g^\ast(\beta_u, q_u)\,du \mmid \mathcal{F}_{t} \right], \quad t \in [0,T].
		\end{equation*}
	\end{itemize}
\end{theorem}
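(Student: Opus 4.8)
The plan is to read this equivalence as the exact synthesis of Corollary \ref{thm:rep-bounded} and Theorem \ref{thm:rep-solution}: the former converts a minimal supersolution into the dual representation, the latter converts the dual representation into a minimal supersolution. The only genuine work is to reconcile the two generators that appear, namely the given $g$ and the one reconstructed from $g^\ast$ inside Theorem \ref{thm:rep-solution}.

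For the implication \emph{(i)} $\Rightarrow$ \emph{(ii)}, suppose $(\mathcal{E}(X),Z)$ is the minimal supersolution for $X\in L^\infty$ and driver $g$. Then $\mathcal{A}(X)\neq\emptyset$ and $\mathcal{E}_t(X)$ is finite, so the degenerate alternative $\mathcal{E}_t\equiv+\infty$ of Corollary \ref{thm:rep-bounded} is excluded. As $g$ is \ref{jconv}, \ref{dec}, \ref{pos} (and \ref{lsc}, which holds because $g=g^{\ast\ast}$ is an lsc convex function, as discussed below), Corollary \ref{thm:rep-bounded} yields
\[
\mathcal{E}_t(X)=\esssup_{(\beta,q)\in\mathcal{D}_+\times\mathcal{Q}}\Set{E_{Q^q}\left[D^\beta_{t,T}X\mid\mathcal{F}_t\right]-\alpha_{t,T}(\beta,q)},
\]
and reinserting $\alpha_{t,T}(\beta,q)=E_{Q^q}[\int_t^T D^\beta_{t,u}g^\ast_u(\beta_u,q_u)\,du\mid\mathcal{F}_t]$ under the conditional expectation gives precisely representation \emph{(ii)}.

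For the converse \emph{(ii)} $\Rightarrow$ \emph{(i)}, note that \emph{(ii)} says literally $\mathcal{E}_t(X)=\phi_t(X)$ for the functional $\phi$ of \eqref{risk_meas} with the choice $f:=g^\ast$. Being a Fenchel transform, $g^\ast$ is \ref{jconv} and \ref{lsc}, and it is \ref{norm} by hypothesis, so Theorem \ref{thm:rep-solution} applies with $X\in L^\infty\subseteq\mathcal{H}^1$: it produces a unique predictable $Z$ such that $(\phi(X),Z)=(\mathcal{E}(X),Z)$ is the minimal supersolution of the BSDE with terminal condition $X$ and generator $\tilde g(y,z):=\sup_{\beta\ge0,\,q\in\R^d}\Set{-\beta y+qz-g^\ast(\beta,q)}$. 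It then remains only to identify $\tilde g$ with the original $g$, which is exactly statement \emph{(i)}.

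This identification is the one step requiring an argument, and is where I expect the main (albeit mild) obstacle to lie. Since $g$ satisfies \ref{dec}, its conjugate $g^\ast$ equals $+\infty$ off $\R_+\times\R^d$ (as already exploited in the proof of Corollary \ref{thm:rep-bounded}), so the constraint $\beta\ge0$ in the supremum defining $\tilde g$ is inactive and $\tilde g$ coincides with the unconstrained biconjugate $g^{\ast\ast}$. By \ref{pos} we have $g>-\infty$, while \ref{norm} gives $\inf g=-g^\ast(0,0)=0<\infty$, so $g$ is proper; combined with convexity and lower semicontinuity, the Fenchel--Moreau theorem yields $g=g^{\ast\ast}=\tilde g$. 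Hence $(\mathcal{E}(X),Z)$ is the minimal supersolution for the driver $g$, proving \emph{(i)}. The recurring delicate point is the standing lower semicontinuity of $g$, which underlies both the invocation of Corollary \ref{thm:rep-bounded} and the final biconjugate identification; one either assumes \ref{lsc} outright or works with the canonical representative $g=g^{\ast\ast}$, a replacement that alters neither $g^\ast$ nor the two statements.
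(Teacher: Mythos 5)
Your proof is correct and follows exactly the route the paper intends: the paper offers no written proof beyond the remark that the theorem is ``suggested by'' Corollary \ref{thm:rep-bounded} (for \emph{(i)} $\Rightarrow$ \emph{(ii)}) and Theorem \ref{thm:rep-solution} (for \emph{(ii)} $\Rightarrow$ \emph{(i)}), which is precisely your synthesis. Your two additions --- the identification $\tilde g=g^{\ast\ast}=g$ via \ref{dec} and Fenchel--Moreau, and the observation that \ref{lsc} must be assumed (or $g$ replaced by its lsc hull) since the theorem's hypotheses omit it while both cited results require it --- are details the paper leaves implicit, and you handle them correctly.
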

	
\begin{appendix}

\section{Some Properties of the Minimal Supersolution Operator}
\label{appendix01}
The aim of this appendix is to present the proofs of some properties of the minimal supersolution used in the paper.

\begin{proof}[Proof of Proposition \ref{thm:properties} ]
		See \citep[Proposition 3.2 and Theorems 4.9 and 4.12]{DHK1101}, but for the sake of readability we give the details for the points \ref{point3}, \ref{point4} and \ref{point5}.

	As for \ref{point3}, let $m \in \mathbb{R}$ with $m \geq 0$ and $X \in \mathcal{X}$. 
	Since $X + m \geq X$, if $\mathcal{A}(X) = \emptyset$ then $\mathcal{A}(X+m) = \emptyset$.
	In that case $\mathcal{E}_0(X+m) = \infty = \mathcal{E}_0(X)$.
	If $\mathcal{A}(X) \neq \emptyset$, let  $(Y,Z) \in \mathcal{A}(X)$.
	For all $0 \leq s\leq t\leq T$, since $g$ fulfills \ref{dec}, we have
	\begin{align*}
		Y_s + &m - \int_s^tg_u(Y_u+m,Z_u)\,du + \int_s^tZ_u\,dW_u \geq m + Y_s - \int_s^tg_u(Y_u,Z_u)\,du + \int_s^tZ_u\,dW_u \geq 	m + Y_t.
	\end{align*}
	Thus, $(Y+m,Z) \in \mathcal{A}(X+m)$, which implies $\mathcal{E}_0(X +m) \leq Y_0 +m$.
	Taking $Y = \mathcal{E}(X)$, we have $\mathcal{E}_0(X + m) \leq \mathcal{E}_0(X) + m$ showing the cash-subadditivity.
	
	As for \ref{point4}, if $g$ does not depend on $y$, one can show that $\mathcal{E}_0$ is additionally cash-superadditive, that is, $\mathcal{E}_0(X +m) \geq \mathcal{E}_0(X) + m$ for $m \geq 0$.
	Indeed, using the same argument we have $\mathcal{A}(X) \neq \emptyset$ implies $\mathcal{A}(X+m)\neq \emptyset$ and $(Y - m,Z) \in \mathcal{A}(X)$ for all $(Y,Z) \in \mathcal{A}(X+m)$.
	Then, if $g$ does not depend on $y$, it follows that $\mathcal{E}_0(X +m) = \mathcal{E}_0(X) +m$ for all $m\in \mathbb{R}_+$.
	Thus, $\mathcal{E}_0(X) + m =\mathcal{E}_0(X) + m^+ - m^- = \mathcal{E}_0(X + m^+) -m^- = \mathcal{E}_0(X + m+m^-) - m^- = \mathcal{E}_0(X + m)$ for all $m \in \mathbb{R}$. 

	As for \ref{point5}, if $g(y,0) = 0$, we have $(y,0) \in \mathcal{A}(y)$, and therefore $\mathcal{E}_0(y) \leq y$. If $g$ is \ref{pos}, for all $(Y,Z) \in \mathcal{A}(y)$, the supermartingale property of $Y$ and the terminal condition yield $Y_0 \geq E[Y_T ] \geq y$. Hence, $\mathcal{E}_0(y) \geq y$. 
\end{proof}

Next, we recall the proofs of the existence, uniqueness and monotone stability of the minimal supersolution with respect to the generator.
These results were already obtained in \citep{DHK1101}. 
Here we argue that their proofs are also valid, up to a slight change, if we replace the assumption \ref{dec} by \ref{jconv} on the generators.

Recall that for $X \in \mathcal{X}: = \set{X \in L^0: X^- \in L^1}$, the condition \ref{pos} ensures that the value process $Y$ of a supersolution $(Y,Z) \in \mathcal{A}(X)$ is a supermartingale such that 
\begin{equation}
\label{eq:supermartingalY}
	Y_t\geq - E[X^- \mid \mathcal{F}_t]\quad \text{for all } t \in [0,T],
 \end{equation} 
 see \citep[Lemma 3.3]{DHK1101}. 

\begin{proof}[Sketch of the Proof of Theorem \ref{thm:existence}]
	The uniqueness of $\bar{Z}$ follows by the supermartingale property of $\bar{Y}$ and the martingale representation theorem.
	The existence is proved by constructing, through concatenations, a sequence of supersolutions $(Y^n, Z^n)$ whose value processes $(Y^n)$ decrease to the process $\essinf\set{Y_t : (Y, Z) \in \mathcal{A}(X)}$. 
	By a compactness argument, a subsequence in the asymptotic convex hull of $(Z^n)$ which converges strongly to a process $\bar{Z}$ can be selected.
	The proof is completed by showing that there is a modification $\bar{Y}$ of $\essinf\set{Y_t : (Y, Z) \in \mathcal{A}(X)}$ such the candidate $(\bar{Y}, \bar{Z})$ is actually an admissible supersolution.
	In the case where $g$ does not satisfy \ref{dec} but \ref{jconv}, this is done as in the proof of the next theorem.
\end{proof}
  
\begin{theorem}
\label{thm:stability-withoutDEC}
	Let $X \in \mathcal{X} $ be a terminal condition, and let $(g^n)$ be an increasing sequence of generators, which converge pointwise to a generator $g$. 
	Suppose that each generator is defined on $\mathbb{R} \times \mathbb{R}^d$ and fulfills \ref{jconv}, \ref{lsc} and \ref{pos} and denote by $\bar{Y}^n $ the value process of the minimal supersolution of the BSDE with generator $g^n$.
	Then $\lim_{n\rightarrow \infty} \bar{Y}^n_0 = \mathcal{E}_0(X)$. 
	If, in addition, $\lim_{n \rightarrow \infty}\bar{Y}^n_0 < \infty $, then for all $t \in [0,T]$ the set $\mathcal{A}(X)$ is nonempty and $(\bar{Y}^n_t) $ converges $P$-a.s. to $\mathcal{E}_t(X)$.
\end{theorem}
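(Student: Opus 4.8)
The plan is to reduce both assertions to the construction of a single supersolution of the limiting BSDE whose value process is the pointwise limit $\bar Y := \lim_n \bar Y^n$. First I would record a monotonicity observation: since $(g^n)$ is increasing, the defining inequality \eqref{eq:supersolutions} becomes easier to satisfy as the generator decreases, so $\mathcal{A}(X) \subseteq \mathcal{A}^{n+1}(X) \subseteq \mathcal{A}^n(X)$, where $\mathcal{A}^n$ denotes the admissible set associated with $g^n$. Taking essential infima over these nested sets yields $\bar Y^n \le \bar Y^{n+1} \le \mathcal{E}(X)$, so the increasing limit $\bar Y$ exists and satisfies $\bar Y \le \mathcal{E}(X)$; in particular $\lim_n \bar Y^n_0 \le \mathcal{E}_0(X)$. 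If this limit is $+\infty$, then $\mathcal{E}_0(X) = +\infty$ and the first claim holds trivially, so henceforth I assume $\lim_n \bar Y^n_0 < \infty$.

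The core of the argument is a compactness and stability scheme for the controls, following the existence proof of Theorem \ref{thm:existence} and the technique of \citep{DHK1101}. By \ref{pos} each $\bar Y^n$ is a supermartingale with $\bar Y^n_t \ge -E[X^- \mid \mathcal{F}_t]$, see \eqref{eq:supermartingalY}. Using the equivalent formulation \eqref{eq:equiv_supersol}, the positivity of $g^n$ and of the increasing processes $K^n$, together with the uniform bound on $\bar Y^n_0$, I would derive a uniform a priori estimate on the martingale parts $\int \bar Z^n\,dW$, namely a localized $L^2$-bound on $E[\langle \int \bar Z^n\,dW\rangle_\tau]$ along a suitable localizing sequence. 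This furnishes the integrability required to invoke \citep[Theorem 1.4]{delbaen03} and select forward convex combinations $\tilde Z^n \in \mathrm{conv}(\bar Z^n, \bar Z^{n+1}, \dots)$ converging $P\otimes dt$-a.s. to some $\bar Z \in \mathcal{L}$; because $\bar Y$ is the monotone limit, the matching convex combinations $\tilde Y^n$ of the value processes satisfy $\bar Y^n \le \tilde Y^n \le \bar Y$ and hence also converge to $\bar Y$.

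The crucial step is to verify that $(\bar Y, \bar Z)$ is a supersolution for the limiting generator $g$. For convex combinations indexed by $m \ge n_0$, joint convexity \ref{jconv} together with $g^m \ge g^{n_0}$ gives $\sum_m \lambda_m g^m_u(\bar Y^m_u, \bar Z^m_u) \ge g^{n_0}_u(\sum_m \lambda_m \bar Y^m_u, \sum_m \lambda_m \bar Z^m_u)$. Letting $n \to \infty$ and using lower semicontinuity \ref{lsc} of $g^{n_0}$ along the a.s.-convergent combinations, then letting $n_0 \to \infty$ with $g^{n_0} \uparrow g$ and monotone convergence, I would pass to the limit in the integrated inequality over each $[s,t]$ and exhibit a c\`adl\`ag adapted increasing process $K$ with $K_0 = 0$ realizing \eqref{eq:equiv_supersol} for $g$. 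Admissibility of $\bar Z$ follows because $\int \bar Z\,dW$ is a local martingale bounded below by the martingale $E[X \mid \mathcal{F}_\cdot]$ up to the finite-variation terms, hence a supermartingale. This shows $\mathcal{A}(X) \neq \emptyset$.

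Finally, minimality of $\mathcal{E}(X)$ applied to $(\bar Y, \bar Z) \in \mathcal{A}(X)$ gives $\mathcal{E}_t(X) \le \bar Y_t$, which combined with the upper bound $\bar Y_t \le \mathcal{E}_t(X)$ from the first step yields $\bar Y_t = \mathcal{E}_t(X)$ for every $t$; thus $\bar Y^n_t \uparrow \mathcal{E}_t(X)$ $P$-a.s., which is the second claim, and the case $t = 0$ gives $\lim_n \bar Y^n_0 = \mathcal{E}_0(X)$. I expect the main obstacle to be the third step: simultaneously handling the three limits—the forward convex combinations, the lower-semicontinuity bound at a fixed level $n_0$, and the monotone passage $g^{n_0} \uparrow g$—while controlling the increasing remainder processes so that their limit is a genuine c\`adl\`ag increasing $K$. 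Verifying this interplay, rather than the a priori estimates or the compactness selection, is the delicate part.
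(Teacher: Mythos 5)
Your proposal is correct and takes essentially the same approach as the paper's proof: monotone comparison giving $\bar{Y}^n \le \mathcal{E}(X)$, compactness via forward convex combinations of the controls as in Theorem \ref{thm:existence}, verification of the limiting supersolution by combining \ref{jconv}, \ref{lsc} and Fatou at a fixed level $g^{n_0}$ before letting $g^{n_0} \uparrow g$, admissibility from a martingale lower bound (though since only $X^- \in L^1$ the correct bound is $\int_0^t \bar{Z}_u\,dW_u \ge -E[X^- \mid \mathcal{F}_t] - \bar{Y}_0$ via \eqref{eq:supermartingalY}, not via $E[X\mid\mathcal{F}_\cdot]$), and conclusion by minimality. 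The step you flag as delicate---exhibiting the increasing process $K$---is actually automatic from the stated equivalence of \eqref{eq:supersolutions} and \eqref{eq:equiv_supersol}, and the paper simply verifies the inequality form directly, exactly as your three-limit scheme does.
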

\begin{proof}
	By monotonicity, see Proposition \ref{thm:properties}, the sequence $(\bar{Y}^n_0)$ is increasing. 
	Set $Y_0 = \lim_{n\rightarrow \infty}\bar{Y}^n_0$, if $Y_0 = \infty$ there is nothing to prove. 
	Else, we put $Y_t := \lim_{n}\bar{Y}^n_t$, $t \in [0,T]$. 
	It follows from the supermartingale property of $\bar{Y}^n$ and the monotone convergence theorem that $Y$ is a c\`adl\`ag supermartingale.
	Using the arguments of the proof of Theorem \ref{thm:existence}, we construct a candidate control $Z$ as pointwise limit of convex combinations $(\tilde{Z}^n )$ of  $(\bar{Z}^n)$, where $(\bar{Y}^n, \bar{Z}^n)$ is the minimal supersolution of the BSDE with generator $g^n$. 
	It remains to verify that $(Y,Z) \in \mathcal{A}(X) $. 
	Fatou's lemma gives
	\begin{equation*}
		Y_s - \int_s^tg_u(Y_u,Z_u)\,du + \int_s^tZ_u\,dW_u  \ge \limsup_{k\rightarrow \infty}\left( Y_s - \int_s^tg^k_u(Y_u,Z_u)\,du + \int_s^tZ_u\,dW_u \right).
	\end{equation*}
	And for every $k \le n$, denoting by $\lambda^n_i$ the convex weights of the convex combination $\tilde{Z}^n$, using \ref{jconv} we have
	\begin{align}
\nonumber		Y_s - \int_s^tg^k_u(Y_u,Z_u)\,du + \int_s^tZ_u\,dW_u & \ge \limsup_n\left( \tilde{Y}^n_s - \int_s^tg_u^k(\tilde{Y}^n_u, \tilde{Z}^n_u)\,du - \int_s^t\tilde{Z}^n_u\,dW_u \right)\\ 
\nonumber													 & \ge \limsup_n \sum_{i = n}^{M^n}\lambda^n_i \left( Y^i_s - \int_s^tg^k(Y^i_u, Z^i_u)\,du + \int_s^t Z^i_u\,dW_u \right)\\
\nonumber   												 & \ge \limsup_n \sum_{i = n}^{M^n}\lambda^n_i \left( Y^i_s - \int_s^tg^i(Y^i_u, Z^i_u)\,du + \int_s^t Z^i_u\,dW_u \right)\\
\label{eq:verif_solution}									 & \ge Y_t.
	\end{align}
	As to the admissibility of $Z$, by means of Equations \eqref{eq:supermartingalY} and \eqref{eq:verif_solution}, we have
	\begin{equation*}
		\int_0^tZ_u\,dW_u \ge -E[X^-\mid \mathcal{F}_t] - Y_0 
	\end{equation*}
	so that $\int Z\,dW$ is a supermartingale as a local martingale bounded from below by a martingale.
	Thus, $Z$ is admissible. 
\end{proof}
\end{appendix}
\bibliographystyle{abbrvnat}
\bibliography{bibliography}

\end{document}